\newtheorem{thm}{Theorem}[section]
\newtheorem{lem}[thm]{Lemma}
\newtheorem{cor}[thm]{Corollary}
\newtheorem{prop}[thm]{Proposition}
\newtheorem{rems}[thm]{Remarks}
\newtheorem{rem}[thm]{Remark}
\DeclareMathAlphabet{\mathpzc}{OT1}{pzc}{m}{it}
\numberwithin{equation}{section}
\newcommand{\Wqb}{W_{p,\mathcal{B}}}
\newcommand{\R}{\mathbb{R}}
\newcommand{\N}{\mathbb{N}}
\newcommand{\A}{\mathbb{A}}
\newcommand{\Ac}{\mathcal{A}}
\newcommand{\E}{\mathbb{E}}
\newcommand{\ml}{\mathcal{L}}
\newcommand{\Om}{\Omega}
\newcommand{\ve}{\varepsilon}
\newcommand{\rd}{\mathrm{d}}
\newcommand{\bqn}{\begin{equation}}
\newcommand{\eqn}{\end{equation}}
\newcommand{\bqnn}{\begin{equation*}}
\newcommand{\eqnn}{\end{equation*}}
\newcommand{\bear}{\begin{eqnarray}} 
\newcommand{\eear}{\end{eqnarray}} 
\newcommand{\bean}{\begin{eqnarray*}} 
\newcommand{\eean}{\end{eqnarray*}} 
\newcommand{\bs}{\begin{split}}
\newcommand{\es}{\end{split}}
\newcommand{\dimens}{\mathrm{dim}}
\newcommand{\codim}{\mathrm{codim}}
\newcommand{\kk}{\mathrm{ker}}
\newcommand{\im}{\mathrm{rg}}
\newcommand{\spann}{\mathrm{span}}
\newcommand{\dhr}{\mathrel{\lhook\joinrel\relbar\kern-.8ex\joinrel\lhook\joinrel\rightarrow}}
\title[Equilibrium Solutions for Structured Population Models]
{Positive Equilibrium Solutions for Age and Spatially Structured Population Models}
\author[Ch. Walker]{Christoph Walker}
\address{Leibniz Universit\"at Hannover, Institut f\"ur Angewandte Mathematik, Welfengarten 1, D--30167 Hannover, Germany.}
\email{walker@ifam.uni-hannover.de}
\begin{document}

\begin{abstract}
The existence of positive equilibrium solutions to age-dependent population equations with nonlinear diffusion is studied in an abstract setting. By introducing a bifurcation parameter measuring the intensity of the fertility it is shown that a branch of (positive) equilibria bifurcates from the trivial equilibrium. In some cases the direction of bifurcation is analyzed. 
\end{abstract}

\keywords{Age structure, nonlinear diffusion, population models, bifurcation.
\\
{\it Mathematics Subject Classifications (2000)}: 35K55, 35K90, 92D25.}

\maketitle

\section{Introduction}

\noindent The present paper is dedicated to the study of nontrivial equilibrium (i.e. nonzero time-independent) solutions to abstract age-structured population models with nonlinear diffusion, that is, to equations of the form
\begin{align}
&\partial_t u\,+\, \partial_au \, +\,     A(u,a)\,u\,+\,\mu(u,a)\, u =0\ ,& t>0\ ,\quad a\in (0,a_m)\ ,\label{1}\\ 
&u(t,0)\, =\, \int_0^{a_m}\beta(u,a)\, u(a)\, \rd a\ ,& t>0\ ,\label{2}
\end{align}
subject to some initial condition at $t=0$. Here, $u=u(t,a)$ is a function taking on values in some Banach space $E_0$ and represents in applications the density at time $t$ of a population of individuals structured by age $a\in J:=[0,a_m)$, where $a_m\in (0,\infty]$ is the maximal age. The real-valued functions $\mu=\mu(u,a)$ and $\beta=\beta(u,a)$ are respectively the death and birth modulus. The operator $A(u,a)$ depending in a certain way on the density $u$ specified later governs the spatial movement of individuals. It is assumed to be a (unbounded) linear operator $A(u,a):E_1\subset E_0\rightarrow E_0$ satisfying additional technical assumptions given later.

Age-structured models have a long history and questions regarding well-posedness and behavior for large times were investigated (see \cite{WebbSpringer} and the references therein) though most research was devoted to models neglecting spatial structure from the outset or considering merely linear diffusion, see e.g. \cite{FragnelliManiar,LanglaisJMB,LanglaisBusenberg,MagalThieme,RhandiSchnaubelt,ThiemeDCDS98} and the references therein. Less seems to be known for the case of age-structured models with nonlinear diffusion (however, see e.g. \cite{BusenbergIannelli3,LanglaisSIAM,WalkerEJAM,WalkerDCDS,LauWalJMPA}). 

The understanding of the large time behavior of age-structured populations whose evolution is governed by equations \eqref{1}, \eqref{2} requires in particular precise information about the existence of equilibrium solutions. Since obviously $u\equiv 0$ is such an equilibrium solution the aim is to establish existence of nontrivial equilibria. Moreover, since $u$ represents a density the main task is to single out the positive equilibrium solutions in the (ordered) space $E_0$.

Equilibria of \eqref{1}, \eqref{2}
are solutions to
\begin{align}
&\partial_au \, +\, A(u,a)\,u\,+\,\mu(u,a)\, u =0\ ,\qquad a\in J\ ,\label{4}\\ 
&u(0)\, =\, \int_0^{a_m}\beta(u,a)\, u(a)\, \rd a\ .\label{44}
\end{align}
Supposing that the map (with $u$ fixed)
$$
a\mapsto \mathbb{A}(u,a):=A(u,a)+\mu(u,a)
$$
generates a parabolic evolution operator $\Pi_u(a,\sigma)$, $0\le \sigma\le a<a_m$, on $E_0$, a solution to \eqref{4}, \eqref{44} must satisfy
\bqn\label{5}
u(a)=\Pi_u(a,0)u(0)\ ,\quad a\in J\ ,\qquad u(0)=Q(u)u(0)\ ,
\eqn
that is, $u(0)$ is (if nonzero) an eigenvector corresponding to the eigenvalue 1 of the linear operator $Q(u)$ on $E_0$ (for $u$ fixed) defined by
\bqn\label{a}
Q(u):=\int_0^{a_m}\beta(u,a)\Pi_u(a,0)\,\rd a\ .
\eqn
Roughly speaking, $Q(u)$ contains information about the spatial distribution of the expected number of newborns that an individual produces over its lifetime when the population's distribution is $u$. In the present paper we suggest a bifurcation problem by introducing a bifurcation parameter $n$ which determines the intensity of the fertility without changing its structure. More precisely, we are interested in nontrivial solutions $(n,u)$ (that is, $u\not\equiv 0$) to
\begin{align}
&\partial_au \, +\, A(u,a)\,u\,+\,\mu(u,a)\, u =0\ ,\qquad a\in J\ ,\label{A}\\ 
&u(0)\, =\, n\int_0^{a_m}b(u,a)\, u(a)\, \rd a\ ,\label{AA}
\end{align}
where we put
\bqn\label{3}
n\, b(u,a)\, :=\,\beta(u,a) \ ,
\eqn
with $b$ being a normalized function such that the spectral radius of the bounded linear operator
$$
Q_0:=\int_0^{a_m}b(0,a)\Pi_0(a,0)\,\rd a\ 
$$
equals 1, that is,
\bqn\label{777}
r(Q_0)=1\ .
\eqn
Note that under this normalization we have $r(Q(u))=nr(Q_0)=n$; the bifurcation parameter $n$ is thus the spectral radius of the ``inherent spatial net production rate at low densities'' (technically when $u\equiv0$). If $r(Q_0)$ is an eigenvalue of $Q_0$, then \eqref{777} may be interpreted as that there exists a distribution for which the population is at balance meaning that the birth processes yield exact replacement (provided that death and birth modulus and spatial displacement are described by $\mu(0,\cdot)$, $\beta(0,\cdot)$, and $A(0,\cdot)$).

In this paper we provide a set of $n$ values for which \eqref{A}, \eqref{AA} have nontrivial and positive solutions, respectively, around the critical value $n=1$ and $u\equiv 0$, analogously to the ``spatially homogeneous'' case (i.e. when $A=0$), see \cite{CushingI}. More precisely, it is shown that a branch of nontrivial solutions bifurcates from (i.e. intersects with) the branch of trivial solutions $(n,u)=(n,0)$, $n\in \R$, at the critical value of $n$. In principle, the direction at which bifurcation occurs will be related to (the values at $u\equiv 0$ of the derivatives of) $\mu$, $\beta$, and $A$ computing a parametrization of the branch of nontrivial solutions. In some cases, the direction can be computed explicitly. In particular, examples will be given where supercritical bifurcation occurs.

In order to derive the results we consider problem \eqref{A}, \eqref{AA} in a more general framework so that the results actually apply to a broader range of similar problems. In Section \ref{section 2} we investigate the general abstract framework and prove the bifurcation result using findings based on the implicit function theorem obtained in \cite{Cushing0}. We also derive a more precise characterization of the nontrivial branch of solutions and show that the equilibria are positive. The subsequent Section \ref{section 3} then gives applications for these results. We shall point out that analogue results for populations structured by age only (that is, when $A=0$) were derived \cite{CushingI,CushingII,CushingIII}. Furthermore, additional results regarding equilibrium solutions for age-structured equations are to be found in e.g. \cite{DelgadoEtAl,DelgadoEtAl2,Iannelli,LanglaisJMB,Pruess1,Pruess2,Pruess3,Webb,WebbSpringer} and the references therein.

\section{Abstract Formulation}\label{section 2}

Given Banach spaces $E$ and $F$ we write $\mathcal{L}(E,F)$ for the space of bounded linear operators from $E$ to $F$ equipped with the usual operator norm, and we put $\mathcal{L}(E):=\mathcal{L}(E,E)$. We write $r(A)$ for the spectral radius of $A\in\ml(E)$. The subspace of $\mathcal{L}(E)$ consisting of compact operators is $\mathcal{K}(E)$. If $E$ is an ordered Banach space we write respectively $\mathcal{L}_+(E)$ and $\mathcal{K}_+(E)$ for the corresponding positive operators. We let $\ml is (E,F)$ denote the subspace of $\mathcal{L}(E,F)$ of topological linear isomorphisms. If $E\stackrel{d}{\hookrightarrow} F$, that is, if $E$ is densely embedded in $F$, then $\mathcal{H}(E,F)$ is the set of all negative generators of analytic semigroups on $F$ with domain $E$. $\mathcal{BIP}(E;\phi)$ stands for the set of operators with bounded imaginary powers and power angle $\phi\in [0,\pi/2)$, that is, those linear operators $A$ in $E$ for which there is $M\ge 1$ such that $\| A^{it}\|_{\ml(E)}\le Me^{\phi\vert t\vert}$, $t\in\R$. For details we refer to \cite{LQPP}. \\

Throughout this paper we suppose that $E_0$ is a real Banach space and $E_1\stackrel{d}{\dhr} E_0$, that is, $E_1$ is a densely and compactly embedded subspace of $E_0$. We fix $p\in (1,\infty)$, put $\varsigma:=\varsigma(p):=1-1/p$ and set $E_\varsigma:=(E_0,E_1)_{\varsigma,p}$ with $(\cdot,\cdot)_{\varsigma,p}$ being the real interpolation functor. Similarly we choose for each $\alpha\in (0,1)\setminus\{1-1/p\}$ an arbitrary admissible interpolation functor $(\cdot,\cdot)_\alpha$ and put $E_\alpha:=(E_0,E_1)_\alpha$
so that $E_1\stackrel{d}{\dhr} E_\alpha$ (see \cite{LQPP}). If $E_0$ is ordered by a closed convex cone $E_0^+$, then the interpolation spaces are equipped with the order naturally induced by $E_0^+$. Given $a_m\in (0,\infty]$ we set $J:=[0,a_m)$ which thus may be bounded or unbounded. Moreover, we put
$$\E_0:=L_p(J,E_0)\ ,\qquad \E_1:=L_p(J,E_1)\cap W_p^1(J,E_0)
$$ and recall that
\bqn\label{6}
\E_1\hookrightarrow BUC(J,E_{\varsigma})
\eqn
according to, e.g. \cite[III.Thm.4.10.2]{LQPP}, where $BUC$ stands for bounded and uniformly continuous. In particular, the trace $\gamma u:= u(0)$ is well-defined for $u\in \E_1$. \\

We then study problems of the form
\begin{align}
 \partial_au \, +\,     \A(u,a)\,u\, &=0\ ,\qquad a\in J\ ,\label{7}\\ 
u(0)\,& =\,n\,\ell(u)\ \label{77}
\end{align}
where $\A(u,a)\in \mathcal{L}(E_1,E_0)$ and $\ell(u)\in E_0$ for $a\in J$ and $u\in \E_1$ with $\ell(0)=0$. We will impose more restrictions later. Introducing
$\A_0(a):=\A(0,a)$ and assuming a decomposition
$$
\ell(u)=\ell_0(u)+\ell_*(u)\ 
$$
with a linear part $\ell_0$, we first focus our attention on the linearization around 0 of the above problem.

\subsection{Preliminaries}

In the following we assume that
\bqn\label{9}
\ell_0\in \ml (\E_1,E_\vartheta)\quad\text{for some}\quad \vartheta\in \big(\varsigma,1\big)
\eqn
and that
\bqn\label{10}
\begin{aligned}   & \A_0\in L_{\infty}(J,\ml(E_1,E_0))\ \text{generates a parabolic evolution operator}\\
&\Pi_0(a,\sigma), 0\le\sigma\le a<a_m,\ \text{on}\ E_0\ \text{with regularity subspace}\ E_1\ \text{and}\\
&\text{possesses maximal}\ L_p\text{-regularity, that is,}\ (\partial_a+\A_0,\gamma)\in \mathcal{L}is(\E_1,\E_0\times E_{\varsigma})\ .
\end{aligned}
\eqn
For details about evolution operators and operators possessing maximal regularity we refer the reader, e.g., to \cite{LQPP}. It seems to be worthwhile to point out that, owing to \eqref{10} and \cite[III.Prop.1.3.1]{LQPP}, the problem
$$
\partial_au \, +\,     \A_0(a)\,u\, =\, f(a)\ ,\quad a\in J\ ,\qquad
u(0)\, =\, u^0
$$
admits for each datum $(f,u^0)\in \E_0\times E_{\varsigma}$ a unique solution $u\in \E_1$ given by
\bqn\label{11}
u(a)=\Pi_0(a,0)u^0+\int_0^a \Pi_0(a,\sigma)f(\sigma)\,\rd \sigma\ ,\quad a\in J\ ,
\eqn
satisfying for some $c_0>0$
\bqn\label{12}
\| u\|_{\E_1}\le c_0\big(\|f\|_{\E_0}+\|u^0\|_{E_{\varsigma}}\big)\ .
\eqn
In particular,
\bqn\label{13}
\Pi_0(\cdot,0)\in\ml (E_{\varsigma},\E_1)\quad\text{and}\quad
K_0\in\ml(\E_0,\E_1)\ ,
\eqn
where
$$
(K_0 f)(a):=\int_0^a\Pi_0(a,\sigma)f(\sigma)\,\rd \sigma\ ,\qquad a\in J\ ,\quad f\in \E_0\ ,
$$
while, due to \eqref{9}, 
\bqn\label{15}
\big(f\mapsto \ell_0(K_0 f)\big)\in\ml (\E_0,E_{\varsigma})\ .
\eqn
Moreover, we obtain from \eqref{9}, \eqref{13}, and the fact that $E_\vartheta\dhr E_{\varsigma}$ (e.g. see \cite[I.Thm.2.11.1]{LQPP})
\bqn\label{16}
Q_0\in\ml (E_{\varsigma},E_\vartheta)\cap\mathcal{K}(E_{\varsigma})\quad\text{for}\quad
Q_0w:=\ell_0\big(\Pi_0(\cdot,0)w\big)\ ,\quad w\in E_{\varsigma}\ .
\eqn

\noindent The next result will be fundamental for what follows.

\begin{lem}\label{A1}
Suppose \eqref{9} and \eqref{10}. Then the operator
$$
Lu:=\big(\gamma u-\ell_0(u),(\partial_a+\A_0)u\big)
$$
satisfies $L\in\ml (\E_1,E_\varsigma\times\E_0)$ and has a closed kernel $\kk(L)$ and a closed range $\im(L)$ of finite dimension and codimension, respectively, both of which admit bounded projections $P_k$ and $P_r$. In fact,
$$
\dimens(\kk(L))=\codim(\im(L))=\dimens(\kk(1-Q_0))<\infty
$$
and
\begin{align*}
&\kk(L)=\spann\big\{\Pi_0(\cdot,0)w\,;\ w\in\kk(1-Q_0)\big\}\ ,\\
&\im(L)^\perp:=(1-P_r)\big(E_\varsigma\times\E_0)=\im(1-Q_0)^\perp\times\{0\}\ ,
\end{align*}
where
$E_\varsigma=\im(1-Q_0)\oplus \im(1-Q_0)^\perp$.
\end{lem}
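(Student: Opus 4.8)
The plan is to reduce every assertion to the Fredholm theory of the compact perturbation $1-Q_0$ of the identity on $E_\varsigma$. The decisive input is \eqref{16}: since $Q_0\in\mathcal{K}(E_\varsigma)$, the operator $1-Q_0$ is Fredholm of index $0$, so $\kk(1-Q_0)$ is finite-dimensional, $\im(1-Q_0)$ is closed of finite codimension, and $\dimens\kk(1-Q_0)=\codim\im(1-Q_0)<\infty$. Fixing any algebraic complement $\im(1-Q_0)^\perp$ (finite-dimensional, hence closed) realizes the splitting $E_\varsigma=\im(1-Q_0)\oplus\im(1-Q_0)^\perp$ together with a bounded projection $P^\perp$ onto $\im(1-Q_0)^\perp$. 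All statements about $L$ will be pulled back to this splitting.

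First I would record boundedness of $L$: one has $(\partial_a+\A_0)\in\ml(\E_1,\E_0)$ and $\gamma\in\ml(\E_1,E_\varsigma)$ by \eqref{10}, while $\ell_0\in\ml(\E_1,E_\vartheta)$, whence $\ell_0\in\ml(\E_1,E_\varsigma)$ by $E_\vartheta\hookrightarrow E_\varsigma$; thus $L\in\ml(\E_1,E_\varsigma\times\E_0)$. Then I would exploit the isomorphism $S:=(\partial_a+\A_0,\gamma)\in\ml is(\E_1,\E_0\times E_\varsigma)$ from \eqref{10}. Writing $\tau$ for the coordinate flip $E_\varsigma\times\E_0\to\E_0\times E_\varsigma$, one checks $\tau L=S-M$ with $Mu:=(0,\ell_0(u))$, so $\tau L=S(I-S^{-1}M)$. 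By \eqref{11} with $f=0$ the operator $S^{-1}M$ is explicit, $(S^{-1}Mu)(a)=\Pi_0(a,0)\ell_0(u)$; it factors through $\ell_0\in\ml(\E_1,E_\vartheta)$, the compact embedding $E_\vartheta\dhr E_\varsigma$, and $\Pi_0(\cdot,0)\in\ml(E_\varsigma,\E_1)$ from \eqref{13}, so $S^{-1}M\in\mathcal{K}(\E_1)$. Hence $I-S^{-1}M$, and with it $L$, is Fredholm of index $0$: closedness, finiteness of $\dimens\kk(L)$ and $\codim\im(L)$, and the bounded projections follow from Riesz--Schauder, with $P_k$ existing because the finite-dimensional space $\kk(L)$ is complemented.

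Next I would identify the kernel and range explicitly. Solving $Lu=0$ gives $(\partial_a+\A_0)u=0$ and $\gamma u=\ell_0(u)$; by \eqref{11} the first equation forces $u=\Pi_0(\cdot,0)w$ with $w:=\gamma u$, and applying $\ell_0$ and using \eqref{16} turns the trace condition into $w=Q_0w$. Conversely every $w\in\kk(1-Q_0)$ produces $u=\Pi_0(\cdot,0)w\in\kk(L)$, so $\kk(L)=\spann\{\Pi_0(\cdot,0)w;\ w\in\kk(1-Q_0)\}$; since $\Pi_0(\cdot,0)$ is injective (as $\Pi_0(0,0)$ is the identity), $w\mapsto\Pi_0(\cdot,0)w$ is an isomorphism onto $\kk(L)$, giving $\dimens\kk(L)=\dimens\kk(1-Q_0)$. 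For the range I would use the full formula \eqref{11}: writing $u=\Pi_0(\cdot,0)u^0+K_0f$, the equation $Lu=(g,f)$ is solvable iff $(1-Q_0)u^0=g+\ell_0(K_0f)$ holds for some $u^0$, i.e. iff $g+\ell_0(K_0f)\in\im(1-Q_0)$. Here \eqref{15} gives $f\mapsto\ell_0(K_0f)\in\ml(\E_0,E_\varsigma)$, so $R(g,f):=\big(P^\perp(g+\ell_0(K_0f)),0\big)$ is a bounded idempotent with $\kk R=\im(L)$ and $\im R=\im(1-Q_0)^\perp\times\{0\}$; then $P_r:=I-R$ is the desired range projection, $\im(L)^\perp=\im R$, and $\codim\im(L)=\dimens\im(1-Q_0)^\perp=\codim\im(1-Q_0)=\dimens\kk(1-Q_0)$.

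The main obstacle, I expect, is the explicit range computation together with the construction of $P_r$: one must run the solution formula \eqref{11} carefully to reduce solvability of $Lu=(g,f)$ to membership of $g+\ell_0(K_0f)$ in $\im(1-Q_0)$, keep track of the coordinate flip $\tau$ between $E_\varsigma\times\E_0$ and $\E_0\times E_\varsigma$, and verify that $R$ is genuinely idempotent with range exactly $\im(1-Q_0)^\perp\times\{0\}$ — its boundedness resting on \eqref{15} and on $P^\perp$ being bounded, which in turn uses the Fredholm property of $1-Q_0$. Once these bookkeeping points are settled, the remaining identities are immediate.
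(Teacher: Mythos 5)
Your proof is correct, and its computational core coincides with the paper's: both reduce the equation $Lu=(h_1,h_2)$ via the solution formula \eqref{11} to the condition $(1-Q_0)u(0)=h_1+\ell_0(K_0h_2)$ (the paper's \eqref{17}), read off from it the kernel and the range characterization \eqref{18}, and your range projection is in fact literally the paper's: with $P_M:=1-P^\perp$ one has $(I-R)(h_1,h_2)=\big(P_Mh_1-(1-P_M)\ell_0(K_0h_2),h_2\big)$, which is exactly \eqref{18B}. The one genuine difference is your opening Fredholm layer: you factor $\tau L=S(I-S^{-1}M)$ with $S=(\partial_a+\A_0,\gamma)$ the isomorphism from \eqref{10} and $S^{-1}Mu=\Pi_0(\cdot,0)\ell_0(u)$ compact on $\E_1$ (factoring through $\ell_0\in\ml(\E_1,E_\vartheta)$, the compact embedding $E_\vartheta\dhr E_\varsigma$, and $\Pi_0(\cdot,0)\in\ml(E_\varsigma,\E_1)$ from \eqref{13}), so Riesz--Schauder applied on $\E_1$ yields closed range, finite-dimensional kernel, index $0$, and complementability all at once, uniformly in whether or not $1$ is an eigenvalue of $Q_0$. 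The paper instead works entirely at the level of the trace space $E_\varsigma$, splits into the cases $\kk(1-Q_0)=\{0\}$ and $\kk(1-Q_0)\neq\{0\}$, and extracts closedness and the dimension count from compactness of $Q_0$ there. Your factorization buys a cleaner, case-free derivation of the qualitative Fredholm facts, including $\dimens\kk(L)=\codim\im(L)$ with no computation; its only cost is a slight redundancy, since the explicit identifications you still carry out to obtain the stated formulas for $\kk(L)$, $\im(L)^\perp$, and $P_r$ reprove those facts anyway.
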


\begin{proof}
First observe that \eqref{11} implies that, for $(h_1,h_2)\in E_\varsigma\times\E_0$, the 
equation
$Lu=(h_1,h_2)$ with $u\in\E_1$
is equivalent to
\bqn\label{17}
u=\Pi_0(\cdot,0)u(0)+K_0h_2\ ,\quad (1-Q_0)u(0)=h_1+\ell_0(K_0 h_2)\ .
\eqn
If $1$ is not an eigenvalue of $Q_0\in\mathcal{K}(E_\varsigma)$, then \eqref{17} easily entails that $\kk(L)$ is trivial. Moreover, in this case we have $h_1+\ell_0(K_0 h_2)\in E_\varsigma$ for any $(h_1,h_2)\in E_\varsigma\times\E_0$ by \eqref{15} and there is a unique $w\in E_\varsigma$ for which
$(1-Q_0)w=h_1+\ell_0(K_0 h_2)$.
Thus $u:=\Pi_0(\cdot,0)w+K_0h_2$ belongs to $\E_1$ due to \eqref{13} and satisfies $Lu=(h_1,h_2)$, whence $\im(L)=E_\varsigma\times\E_0$ from which the claim follows in this case.

Otherwise, if $1$ is an eigenvalue of $Q_0\in\mathcal{K}(E_\varsigma)$, then \eqref{17} ensures
$$
\kk(L)=\spann\big\{\Pi_0(\cdot,0)w\,;\, w\in \kk(1-Q_0)\big\}\subset\E_1
$$
which is clearly closed since $L\in\ml (\E_1,E_\varsigma\times\E_0)$ by \eqref{9}, \eqref{10}. In particular, the dimension of $\kk(L)$ equals the dimension of $\kk(1-Q_0)$, the latter clearly being finite since the eigenvalue 1 has finite multiplicity. Therefore, $\kk(L)$ is complemented in $\E_1$ and admits a bounded projection $P_k\in\ml(\E_1,\kk(L))$ (e.g., see \cite[Lem.4.21]{Rudin}). Next, given $(h_1,h_2)\in \im(L)\subset E_\varsigma\times\E_0$ and $u\in \E_1$ with $Lu=(h_1,h_2)$, we have $h_1+\ell_0(K_0 h_2)\in \im(1-Q_0)$ as observed in \eqref{17}. Conversely, if $(h_1,h_2)\in \E_\varsigma\times\E_0$ and $(1-Q_0)w=h_1+\ell_0(K_0 h_2)$ for some $w\in\E_\varsigma$, then $Lu=(h_1,h_2)$ for $u:=\Pi_0(\cdot,0)w+K_0h_2$. Thus
\bqn\label{18}
\im(L)=\big\{(h_1,h_2)\in E_\varsigma\times\E_0\,;\, h_1+\ell_0(K_0 h_2)\in \im(1-Q_0)\big\}\ .
\eqn
Since $Q_0$ is compact, $M:=\im(1-Q_0)$ is a closed subspace of $E_\varsigma$, hence $\im(L)$ is closed in $E_\varsigma\times\E_0$ due to \eqref{15}. Furthermore, $\codim(M)=\dimens(\kk(1-Q_0))<\infty$, and hence $M$ is complemented in $E_\varsigma$, that is, $E_\varsigma=M\oplus M^\perp$. Let $P_M\in\ml(E_\varsigma)$ denote the projection onto $M$ along $M^\perp$ and set
\bqn\label{18B}
P_r(h_1,h_2):=\big(P_Mh_1-(1-P_M)\ell_0(K_0h_2),h_2\big)\ .
\eqn
Then clearly $P_r^2=P_r\in\ml(E_\varsigma\times \E_0)$ by \eqref{15}, $P_r(E_\varsigma\times\E_0)=\im(L)$ by \eqref{18}, and
$$
(1-P_r)(h_1,h_2)=\big((1-P_M)(h_1+\ell_0(K_0h_2)),0\big)\in M^\perp\times\{0\}\ .
$$
Thus we conclude that $E_\varsigma\times\E_0=\im(L)\oplus \im(L)^\perp$ with $\im(L)^\perp=M^\perp\times\{0\}$, and so $$\codim(\im(L))=\dimens(M^\perp)=\dimens(\kk(1-Q_0))=\dimens(\kk(L))\ .$$
This proves the assertion.
\end{proof}

The verification of \eqref{10} is not a simple task in general. We thus recall conditions that allow us in Section~\ref{section 3} to consider cases for which \eqref{10} is readily verified.

\begin{lem}\label{B1}
Suppose that
\bqn\label{1001}
\A_0\in BUC(J,\ml(E_1,E_0))\ \text{generates a parabolic evolution operator on}\ E_0\ .
\eqn
Further suppose, for each $a\in J$, that $0$ belongs to the resolvent set of $\A(a)$, that
\bqn\label{3001}
 \A_0(a)\ \text{possesses maximal}\ L_p\text{-regularity}\ ,
\eqn
and that
\bqn\label{400a}
\lim_{a\rightarrow\infty}\A_0(a)\ \text{exists in}\ \ml(E_1,E_0)\ \text{if}\ a_m=\infty\ .
\eqn
Then \eqref{10} is satisfied.
\end{lem}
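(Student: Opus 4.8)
The statement really has two parts. The assertions in \eqref{10} that $\A_0\in L_\infty(J,\ml(E_1,E_0))$ and that $\A_0$ generates a parabolic evolution operator $\Pi_0(a,\sigma)$ with regularity subspace $E_1$ are essentially immediate: $BUC(J,\ml(E_1,E_0))\hookrightarrow L_\infty(J,\ml(E_1,E_0))$, and \eqref{1001} already furnishes the evolution operator. The substance of the lemma is the maximal $L_p$-regularity claim $(\partial_a+\A_0,\gamma)\in\ml is(\E_1,\E_0\times E_\varsigma)$. My plan is therefore to concentrate entirely on showing that $(\partial_a+\A_0,\gamma)$ is a topological isomorphism, by a freezing-of-coefficients argument that upgrades the \emph{pointwise} hypothesis \eqref{3001} to maximal regularity for the nonautonomous problem.

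First I would record the autonomous building block. If $A\in\ml(E_1,E_0)$ possesses maximal $L_p$-regularity and satisfies $0\in\rho(A)$, then $(\partial_a+A,\gamma)$ is an isomorphism from $L_p(I,E_1)\cap W_p^1(I,E_0)$ onto $L_p(I,E_0)\times E_\varsigma$ for every subinterval $I\subseteq J$, including the half-line; this is the standard consequence of the Dore--Venni / Pr\"uss--Sohr theory as recorded in \cite{LQPP}. On bounded intervals maximal regularity alone already yields the isomorphism, whereas the resolvent condition $0\in\rho(A)$ is exactly what promotes the a priori estimate to invertibility on unbounded intervals by controlling the behaviour as $a\to\infty$.

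Next I would localize. Since $\A_0\in BUC(J,\ml(E_1,E_0))$ it is uniformly continuous, so for any $\varepsilon>0$ there is a partition $0=b_0<b_1<\cdots$ of $J$ into intervals $I_j=[b_{j-1},b_j)$ on each of which $\sup_{a\in I_j}\|\A_0(a)-\A_0(b_{j-1})\|_{\ml(E_1,E_0)}<\varepsilon$. On $I_j$ I write $\partial_a+\A_0=(\partial_a+A_j)+B_j(\cdot)$ with the frozen operator $A_j:=\A_0(b_{j-1})$ and $\|B_j\|_{L_\infty(I_j,\ml(E_1,E_0))}<\varepsilon$; regarding $B_j$ as a map $\E_1\to\E_0$ its norm is controlled by $\varepsilon$, so a Neumann-series perturbation of the invertible operator $(\partial_a+A_j,\gamma)$ shows, for $\varepsilon$ small, that the local problem on $I_j$ with datum in $L_p(I_j,E_0)\times E_\varsigma$ is uniquely solvable with maximal regularity. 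I then reassemble a global solution by solving successively on $I_1,I_2,\dots$, feeding the right-endpoint trace on $I_j$ — which lies in $E_\varsigma$ by \eqref{6} — as the initial value on $I_{j+1}$, and checking via \eqref{6} and the uniqueness of the frozen problems that the pieces join into an element of $\E_1=L_p(J,E_1)\cap W_p^1(J,E_0)$ with the prescribed trace at $a=0$. For $a_m<\infty$ only finitely many pieces occur and this finishes the construction. For $a_m=\infty$ I use \eqref{400a}: finitely many $I_j$ exhaust some $[0,R]$, and on the tail $[R,\infty)$ I invoke the constant operator $\A_0(\infty)$, applying the autonomous half-line result of the previous paragraph after a single perturbation step.

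The main obstacle is uniformity. The Neumann series must converge with constants independent of $j$, which requires the maximal-regularity (isomorphism) bounds of the frozen operators $A_j$ to be uniform. Here I would use that, by \eqref{1001} and \eqref{400a}, the family $\mathcal{F}:=\{\A_0(a):a\in J\}\cup\{\A_0(\infty)\}$ is relatively compact in $\ml(E_1,E_0)$, that every member possesses maximal $L_p$-regularity, and that this isomorphism property is open and stable under small $\ml(E_1,E_0)$-perturbations; compactness then yields a uniform bound over $\mathcal{F}$. The same stability argument shows that $\A_0(\infty)$ inherits both maximal regularity and the invertibility $0\in\rho(\A_0(\infty))$ from the nearby operators $\A_0(a)$, which is precisely what makes the half-line treatment of the tail legitimate. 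Thus the limit condition \eqref{400a} is not a mere convenience but the ingredient that reduces the infinite-interval case to finitely many frozen pieces plus one genuinely autonomous half-line problem.
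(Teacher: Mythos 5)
The paper disposes of this lemma in one line, citing \cite[Thm.1.4]{Saal}; your proposal instead reconstructs a proof, and the scheme you choose --- autonomous isomorphism for frozen coefficients, a partition of $J$ furnished by uniform continuity, Neumann-series perturbation, and successive gluing via the trace embedding \eqref{6} --- is indeed the standard mechanism behind results of this type, so as a route it is genuinely different from (and more informative than) the paper's citation. Your autonomous building block is stated correctly (maximal regularity gives the isomorphism on bounded intervals; $0\in\rho(A)$ is what is needed additionally on the half-line), and the localization and gluing steps are fine.

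However, there is a genuine gap at the point you yourself identify as the crux: the treatment of the limit operator. You claim that ``the same stability argument shows that $\A_0(\infty)$ inherits both maximal regularity and the invertibility $0\in\rho(\A_0(\infty))$ from the nearby operators $\A_0(a)$.'' Stability (openness) arguments run in the opposite direction: they propagate a property from one operator to all operators in a neighborhood of it; they never transfer a property from the members of a convergent family to its limit. Invertibility is an open condition, not a closed one, and the same is true of the class of operators with maximal $L_p$-regularity. This is not a cosmetic flaw, because the needed properties of $\A_0(\infty)$ genuinely do not follow from the stated pointwise hypotheses. Take $E_0=E_1=\R$ (the standing assumptions hold, since in finite dimensions the identity embedding is dense and compact), $J=[0,\infty)$, and $\A_0(a):=e^{-a}$. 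Then $\A_0\in BUC(J,\ml(E_1,E_0))$ generates the evolution operator $\Pi_0(a,\sigma)=\exp\bigl(e^{-a}-e^{-\sigma}\bigr)$, each $\A_0(a)=e^{-a}>0$ is invertible and possesses maximal $L_p$-regularity, and $\lim_{a\to\infty}\A_0(a)=0$ exists in $\ml(E_1,E_0)$; yet $(\partial_a+\A_0,\gamma)$ is not surjective onto $\E_0\times E_\varsigma$, since the only solution of $\partial_a u+e^{-a}u=0$, $u(0)=1$ is $u(a)=\exp\bigl(e^{-a}-1\bigr)\rightarrow e^{-1}\neq 0$, which fails to be $p$-integrable on $J$. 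Here $\|\A_0(a)^{-1}\|=e^{a}$ blows up, $\A_0(\infty)=0$ is not invertible, and the conclusion \eqref{10} fails. Consequently, maximal regularity and invertibility of $\A_0(\infty)$ (and, when $a_m<\infty$, of the endpoint limit $\lim_{a\uparrow a_m}\A_0(a)$, which your compactness argument for uniform constants also silently requires, since that limit belongs to the closure of the family $\mathcal{F}$) must be \emph{assumed}, not derived --- this is how the appeal to \cite[Thm.1.4]{Saal} has to be read, namely that the resolvent and maximal-regularity conditions hold uniformly, equivalently up to the limit operator. Once these conditions are added, your argument does close: every member of the compact family then has the isomorphism property, openness plus compactness yields uniform constants, and your freezing, perturbation, and gluing steps go through as written.
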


\begin{proof}
This is a consequence of \cite[Thm.1.4]{Saal}.
\end{proof}

\begin{rems}\label{5001}
(a) If $\A_0\in C^\rho(J,\mathcal{H}(E_1,E_0))$ for some $\rho>0$, then it generates a parabolic evolution operator on $E_0$ due to \cite[II.Cor.4.4.2]{LQPP}.\\
(b) In case that $E_0$ is a UMD-space (see \cite{LQPP} for a definition and properties), condition \eqref{3001} holds if for each $a\in J$ there is some angle $\theta(a)\in [0,\pi/2)$ for which $\A_0(a)\in\mathcal{BIP}(E_0;\theta(a))$, see \cite[III.Thm.4.10.7]{LQPP}.
\end{rems}

\subsection{Nonlinear Theory}
We now focus on problem \eqref{7}, \eqref{77}. Let $m\in\N\setminus\{0\}$ and let $\Sigma$ denote an open ball in $\E_1$ centered at $0$ of some positive radius $R_0>0$. Suppose that
\bqn\label{19}
\A\in C^m\big(\Sigma,L_{\infty}(J,\ml(E_1,E_0)\big)\quad\text{and}\quad \A_0:=\A(0)\ \text{satisfies}\ \eqref{10}\ .
\eqn
We set
$\A_*(u):=\A(u)-\A_0
$ and sometimes write $\A(u,a):=\A(u)(a)$ for $u\in\Sigma$, $a\in J$ and accordingly $\A_*(u,a):=\A_*(u)(a)$. We also assume that $\ell$ admits a decomposition 
\bqn\label{19B}
\ell(u)=\ell_0(u)+\ell_*(u)\ ,
\eqn
where the linear part $\ell_0$ satisfies \eqref{9} and $\ell_*$ is such that
$\ell_*(\varepsilon u)=\varepsilon\bar{\ell}_*(\varepsilon, u)$, $u\in \Sigma$, $\vert\ve\vert<1$,
for some function
\bqn\label{20}
\bar{\ell}_*\in C^m((-1,1)\times \Sigma,E_\varsigma)\quad\text{with}\quad \bar{\ell}_*(0,\cdot)=0\ ,\quad D_u\bar{\ell}_*(\ve,\cdot)=0\ .
\eqn
We put
$$
T(\lambda,u):=\lambda\big(\ell_0(u),0\big)+\big((\lambda+1)\ell_*(u),-\A_*(u)u\big)\ ,\quad (\lambda,u)\in\R\times\Sigma\ ,
$$
and note that with $n=\lambda+1$ problem \eqref{7}, \eqref{77} can be 
be re-written as $Lu=T(\lambda,u)$ with $L$ being given in Lemma \ref{A1}. We then introduce $\bar{T}\in C^m(\R\times (-1,1)\times\Sigma, E_\varsigma\times\E_0)$ as
$$
\bar{T}(\lambda,\varepsilon,u):=\lambda\big(\ell_0(u),0\big)+\big((\lambda+1)\bar{\ell}_*(\varepsilon, u),-\A_*(\varepsilon u)u\big)\ ,\quad (\lambda,\varepsilon,u)\in \R\times (-1,1)\times\Sigma\ ,
$$ 
and observe that $T(\lambda,\varepsilon u)=\varepsilon \bar{T}(\lambda,\varepsilon,u)$. Nontrivial solutions to \eqref{7}, \eqref{77} are then provided by the following

\begin{thm}\label{A2}
Suppose \eqref{9}, \eqref{19}, \eqref{19B}, and \eqref{20}. Moreover, suppose that $r(Q_0)=1$ is an eigenvalue of $Q_0\in\mathcal{K}(E_\varsigma)$ with geometric multiplicity $1$, where $Q_0$ is defined in \eqref{16}, and let $B\in E_\varsigma$ be a corresponding eigenvector. Then there exists $\varepsilon_0>0$ such that the problem
\begin{align*}
 \partial_au \, +\,     \A(u,a)\,u\, &=0\ ,\qquad a\in J\ ,\\ 
u(0)\,& =\,n\,\ell(u)\ 
\end{align*}
has a branch of nontrivial solutions $\big\{\big(n(\ve),u(\ve)\big)\in\R^+\times\E_1\,;\, 0<\vert\ve\vert<\ve_0\big\}$ of the form
$$
n(\varepsilon)=1+\lambda(\varepsilon)\ ,\qquad u(\varepsilon)=\varepsilon\big(\Pi_0(\cdot,0)B+z(\varepsilon)\big)\ ,\quad 0<\vert \varepsilon\vert<\varepsilon_0\ ,
$$
where $\lambda:(-\varepsilon_0,\varepsilon_0)\rightarrow \R$ and $z:(-\varepsilon_0,\varepsilon_0)\rightarrow \kk(L)^\perp$ are $m$-times continuously differentiable with $\lambda(0)=0$ and $z(0)=0$.
\end{thm}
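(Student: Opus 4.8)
The plan is to recast the problem as the single operator equation $Lu=T(\lambda,u)$ (with $n=\lambda+1$, $L$ as in Lemma~\ref{A1}) and to solve it by the implicit function theorem after exploiting the exact homogeneity $T(\lambda,\varepsilon u)=\varepsilon\bar T(\lambda,\varepsilon,u)$. Write $v_0:=\Pi_0(\cdot,0)B\in\kk(L)$, which spans $\kk(L)$ since $\kk(1-Q_0)=\spann\{B\}$ by Lemma~\ref{A1} and the geometric simplicity of the eigenvalue. Inserting the ansatz $u=\varepsilon(v_0+z)$ with $z\in\kk(L)^\perp$ and using $Lv_0=0$ together with the homogeneity gives $Lu-T(\lambda,u)=\varepsilon\big(Lz-\bar T(\lambda,\varepsilon,v_0+z)\big)$, so that for $\varepsilon\neq0$ the equation is equivalent to
$$
\tilde F(\lambda,\varepsilon,z):=Lz-\bar T(\lambda,\varepsilon,v_0+z)=0\ .
$$
By \eqref{19}, \eqref{19B}, \eqref{20} the left-hand side is $C^m$ and extends smoothly across $\varepsilon=0$; the point of the rescaling is precisely that $\varepsilon$ becomes the natural curve parameter and that $\tilde F=0$ can be solved for the pair $(\lambda,z)$ near the trivial state.

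First I would verify that $(\lambda,\varepsilon,z)=(0,0,0)$ solves $\tilde F=0$. Indeed $\bar T(0,0,v_0)=\big(\bar\ell_*(0,v_0),-\A_*(0)v_0\big)=(0,0)$, because $\bar\ell_*(0,\cdot)=0$ by \eqref{20} and $\A_*(0)=\A(0)-\A_0=0$, whence $\tilde F(0,0,0)=0$. Next I would compute $D_{(\lambda,z)}\tilde F(0,0,0)$ on $\R\times\kk(L)^\perp$. Differentiating $\bar T$ in $u$ and using again $D_u\bar\ell_*(\varepsilon,\cdot)=0$ from \eqref{20} together with $\A_*(0)=0$ yields $D_u\bar T(0,0,v_0)=0$, so $D_z\tilde F(0,0,0)h=Lh$ for $h\in\kk(L)^\perp$. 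Differentiating in $\lambda$ gives $\partial_\lambda\bar T(0,0,v_0)=\big(\ell_0(v_0),0\big)=(Q_0B,0)=(B,0)$, the last equality by the definition \eqref{16} of $Q_0$ and $Q_0B=B$; hence $D_\lambda\tilde F(0,0,0)=-(B,0)$ and $D_{(\lambda,z)}\tilde F(0,0,0)(s,h)=Lh-s(B,0)$.

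It remains to show this linear map is an isomorphism of $\R\times\kk(L)^\perp$ onto $E_\varsigma\times\E_0$. By Lemma~\ref{A1} the restriction $L\colon\kk(L)^\perp\to\im(L)$ is an isomorphism and $\codim\im(L)=\dimens\kk(1-Q_0)=1$, so $\im(L)^\perp$ is one-dimensional. Consequently both surjectivity and injectivity of $(s,h)\mapsto Lh-s(B,0)$ reduce to the single requirement $(B,0)\notin\im(L)$, which by the characterization \eqref{18} is exactly $B\notin\im(1-Q_0)$. This is the step I expect to be the main obstacle: it is the genuine transversality underlying the bifurcation, and it holds precisely when $1$ is an algebraically simple eigenvalue, equivalently when $\kk(1-Q_0)\cap\im(1-Q_0)=\{0\}$, which forces $B\notin\im(1-Q_0)$; this is the decisive role of the simplicity hypothesis on the eigenvalue. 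Granting it, the implicit function theorem furnishes $C^m$ maps $\varepsilon\mapsto\big(\lambda(\varepsilon),z(\varepsilon)\big)\in\R\times\kk(L)^\perp$ with $\lambda(0)=0$ and $z(0)=0$ solving $\tilde F(\lambda(\varepsilon),\varepsilon,z(\varepsilon))=0$ on some interval $(-\varepsilon_0,\varepsilon_0)$. Setting $n(\varepsilon)=1+\lambda(\varepsilon)$ and $u(\varepsilon)=\varepsilon\big(\Pi_0(\cdot,0)B+z(\varepsilon)\big)$ then yields, for $0<\vert\varepsilon\vert<\varepsilon_0$, genuine solutions of $Lu=T(\lambda,u)$ and hence of \eqref{7}, \eqref{77}; after possibly shrinking $\varepsilon_0$ one has $n(\varepsilon)>0$ (since $n(\varepsilon)\to1$) and $u(\varepsilon)\neq0$ (since $v_0+z(\varepsilon)\to v_0\neq0$), which is the asserted branch of nontrivial solutions.
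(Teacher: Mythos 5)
Your proposal is in substance the paper's own argument carried out by hand: the paper likewise rewrites the problem as $Lu=T(\lambda,u)$, uses the rescaling $T(\lambda,\varepsilon u)=\varepsilon\bar T(\lambda,\varepsilon,u)$, and rests everything on Lemma \ref{A1}; the only structural difference is that the paper verifies hypotheses H1--H4 of \cite[Thm.1]{Cushing0} and invokes that theorem as a black box, whereas you apply the implicit function theorem directly to $\tilde F(\lambda,\varepsilon,z)=Lz-\bar T(\lambda,\varepsilon,v_0+z)$, $v_0:=\Pi_0(\cdot,0)B$, solving for $(\lambda,z)$ jointly in Crandall--Rabinowitz style. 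Your computations $\bar T(0,0,v_0)=(0,0)$, $D_z\tilde F(0,0,0)=L$, $D_\lambda\tilde F(0,0,0)=-(B,0)$ are correct and correspond exactly to the paper's verification of H3 and H4; your formulation has the merit of making the transversality condition explicit, the paper's the merit of brevity.

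The one genuine soft spot is that transversality itself. You correctly reduce the isomorphism property to $(B,0)\notin\im(L)$, i.e.\ by \eqref{18} to $B\notin\im(1-Q_0)$, and correctly observe that, given $\dimens\kk(1-Q_0)=1$, this is equivalent to \emph{algebraic} simplicity of the eigenvalue $1$. But you then claim it is ``forced'' by the simplicity hypothesis of the theorem, which assumes only \emph{geometric} multiplicity $1$; that implication is false in general (a $2\times2$ Jordan block with eigenvalue $1$ is a compact operator with spectral radius $1$, geometric multiplicity $1$, and eigenvector lying in $\im(1-Q_0)$), and when $B\in\im(1-Q_0)$ your linearization is not surjective, so the argument genuinely stops there. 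You would need to add algebraic simplicity as a hypothesis or establish it in the situation at hand. You are, however, on exactly the same footing as the paper: its proof makes the identical tacit assumption when it writes ``$Q_0B=B\in M^\perp$'' and takes $\{(B,0)\}$ as a basis of $\im(L)^\perp$, since a complement $M^\perp$ of $M=\im(1-Q_0)$ can contain $B$ only if $B\notin M$. In the paper's applications the issue evaporates because there $Q_0$ is strongly positive and Krein--Rutman theory makes $r(Q_0)=1$ algebraically simple; neither your argument nor the paper's derives this from geometric simplicity alone.
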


\begin{proof}
We re-write \eqref{7}, \eqref{77} as $Lu=T(\lambda,u)$ and validate the requirements for Theorem 1 in \cite{Cushing0}. First recall that Lemma \ref{A1} warrants that $L\in\ml(\E_1,E_\varsigma\times\E_0)$ has a closed kernel $\kk(L)=\spann\big\{\Pi_0(\cdot,0)B\big\}$ and a closed range $\im(L)$ both admitting bounded projections $P_k$ and $P_r$, respectively, and that the codimension of $\im(L)$ equals 1. Thus H1 and H2 in \cite{Cushing0} hold. To validate H3 therein we just have to observe that for $y\in \kk(L)\cap\Sigma$
$$
\bar{T}(0,0,y)=\big(\bar{\ell}_*(0,y), -\A_*(0)y\big)=(0,0)
$$
and
$$D_3 \bar{T}(0,0,y)=\big(0,-\A_*(0)y\big)=(0,0)\ .
$$
It remains to verify H4 in \cite{Cushing0}. For, let $1-P_r$ be the projection of $E_\varsigma\times \E_0$ onto the one-dimensional space $\im(L)^\perp=M^\perp\times\{0\}$ with $M=\im(1-Q_0)$ and let $c(\lambda,\varepsilon,z)$ be the component of $\bar{T}(\lambda,\varepsilon,\Pi_0(\cdot,0)B+z)$ with respect to the basis $\big\{(B,0)\big\}$ of $\im(L)^\perp$ for given $\lambda\in\R$, $\vert\varepsilon\vert<1$, and $\|z\|_{\E_1}<R_0/2$. Here we may assume without loss of generality that $\|\Pi_0(\cdot,0)B\|_{\E_1}<R_0/2$. Hence it follows from $Q_0B=B\in M^\perp$, \eqref{20}, and \eqref{18B} that
$$
(1-P_r)\bar{T}(\lambda,0,\Pi_0(\cdot,0)B)=(1-P_r)(\lambda B,0)=\lambda(B,0)\ ,
$$
that is, $c_\lambda(\lambda,0,0)=1$. Now \cite[Thm.1]{Cushing0} implies the assertion.
\end{proof}

\begin{rem}
Clearly, the result applies to non-homogeneous problems
\begin{align*}
 \partial_au \, +\,     \A(u,a)\,u\, &=g(u,a)\ ,\qquad a\in J\ ,\\ 
u(0)\,& =\,n\,\ell(u)\ 
\end{align*}
as well provided that there is $\bar{g}$ such that $g(\ve u,\cdot)=\ve \bar{g}(\ve, u,\cdot)$ with $$[(\ve,u)\mapsto \bar{g}(\ve,u,\cdot)]\in C^m((-1,1)\times\Sigma,\E_0)\quad\text{and}\quad \bar{g}(0,\cdot,\cdot)=0\ .
$$
\end{rem}

Next we compute the $\varepsilon$-expansion of the branch $(n(\varepsilon),u(\varepsilon))$. Under the assumptions of Theorem \ref{A2} we write $\E_1=\kk(L)\oplus \kk(L)^\perp$ and let $P_k\in\ml (\E_1)$ denote the projection onto $\kk(L)=\spann\big\{\Pi_0(\cdot,0)B\big\}$ such that $P_k u=\mathpzc{k} (u)\Pi_0(\cdot,0)B$ with $\mathpzc{k} (u)\in\R$
for $u\in \E_1$. Again we set $M=\im(1-Q_0)$ and $E_\varsigma=M\oplus M^\perp$ with corresponding projection $P_M$ (see the proof of Lemma \ref{A1}).

\begin{prop}\label{A3}
In addition to the assumptions of Theorem \ref{A2} with $m\ge 2$ suppose that $D_u\ell_*(0)=0$. Then the branch of nontrivial solutions $(n(\varepsilon),u(\varepsilon))$, $\vert \varepsilon\vert<\varepsilon_0$, from Theorem \ref{A2} can be written in the form
$$
n(\varepsilon)=1+\zeta\varepsilon+n_*(\varepsilon)\ ,\qquad u(\varepsilon)=\varepsilon\Pi_0(\cdot,0)B+\varepsilon^2\big(\Pi_0(\cdot,0)\xi-K_0h\big)+\varepsilon u_*(\varepsilon)
$$
for $\vert \varepsilon\vert<\varepsilon_0$, where $n_*: (-\varepsilon_0,\varepsilon_0)\rightarrow\R$ and $u_*:(-\varepsilon_0,\varepsilon_0)\rightarrow \kk(L)^\perp$ are such that $\vert n_*(\varepsilon)\vert=o(\varepsilon^2)$ and $\|u_*(\varepsilon)\|_{\E_1}= o(\varepsilon^2)$ as $\vert \varepsilon\vert\rightarrow 0$. The function $h\in\E_0$ is defined by
$$
h(a):=\big(D_u\A_*(0)(\Pi_0(\cdot,0)B)(a)\big)\Pi_0(a,0)B\ ,\quad a\in J\ ,
$$
$\zeta\in\R$ is the unique coefficient of
$$
(1-P_M)\big(\ell_0(K_0h)-g\big)=\zeta B\in M^\perp
$$
with 
$$
g:=\frac{1}{2}D_u^2\ell_*(0)[\Pi_0(\cdot,0)B,\Pi_0(\cdot,0)B]\in E_\varsigma\ ,
$$
and $\xi\in E_\varsigma$ is the unique solution to
$$
(1-Q_0)\xi=\zeta B+g-\ell_0(K_0h)\in M\ ,\quad \mathpzc{k} (\Pi_0(\cdot,0)\xi)=\mathpzc{k} (K_0h)\ .
$$

\end{prop}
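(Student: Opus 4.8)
The plan is to exploit the regularity granted by Theorem \ref{A2}. Writing $\Phi:=\Pi_0(\cdot,0)B$ and $v(\ve):=\Phi+z(\ve)$, the branch solves the reduced identity $Lv(\ve)=\bar T(\lambda(\ve),\ve,v(\ve))$ for $\vert\ve\vert<\ve_0$; this is exactly the equation produced by \cite[Thm.1]{Cushing0}, obtained from $Lu=T(\lambda,u)$ by inserting $u=\ve v$ and using $T(\lambda,\ve v)=\ve\bar T(\lambda,\ve,v)$, and it holds also at $\ve=0$ by continuity (both sides vanishing there). Since $\zeta\ve$ is the first Taylor coefficient of $\lambda(\ve)$ and $\ve^2(\Pi_0(\cdot,0)\xi-K_0h)$ is the first Taylor coefficient of $\ve\,z(\ve)$, I would identify $\zeta=\dot\lambda(0)$ and $\Pi_0(\cdot,0)\xi-K_0h=\dot z(0)$, then set $n_*(\ve):=\lambda(\ve)-\zeta\ve$ and $u_*(\ve):=z(\ve)-\ve\,\dot z(0)$ and read off the remainder estimates from Taylor's theorem applied to the $C^m$ ($m\ge2$) maps $\lambda$ and $z$, both of which vanish at $0$.

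The heart of the matter is the computation of $\dot\lambda(0)$ and $\dot z(0)$ by differentiating the reduced identity once at $\ve=0$, which yields
\begin{equation*}
L\dot z(0)=\dot\lambda(0)\,D_1\bar T(0,0,\Phi)+D_2\bar T(0,0,\Phi)+D_3\bar T(0,0,\Phi)\dot z(0)
\end{equation*}
since $\dot v(0)=\dot z(0)$ and $L\Phi=0$. The three partials I would evaluate exactly as in the proof of Theorem \ref{A2}: one has $D_3\bar T(0,0,\cdot)=0$ because $\A_*(0)=0$ and $D_u\bar\ell_*(0,\cdot)=0$, so the last summand drops out, and $D_1\bar T(0,0,\Phi)=(\ell_0(\Phi),0)=(Q_0B,0)=(B,0)$ because $Q_0B=B$. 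The key term is $D_2\bar T(0,0,\Phi)=(g,-h)$: differentiating $\bar T$ in its explicit $\ve$-slot, the second component gives $-\partial_\ve[\A_*(\ve\Phi)\Phi]\vert_{\ve=0}=-(D_u\A_*(0)[\Phi])\Phi$, which pointwise is precisely $-h$, while the first component gives $\partial_\ve\bar\ell_*(0,\Phi)$, and matching the $\ve^2$-coefficients in $\ell_*(\ve u)=\ve\bar\ell_*(\ve,u)$—using $\ell_*(0)=0$ together with the extra hypothesis $D_u\ell_*(0)=0$—identifies $\partial_\ve\bar\ell_*(0,\Phi)=\tfrac12D_u^2\ell_*(0)[\Phi,\Phi]=g$. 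Hence the differentiated identity reads $L\dot z(0)=(\zeta B+g,-h)$ with $\zeta=\dot\lambda(0)$ still to be determined.

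The final step is to solve this linear equation through the Fredholm structure of Lemma \ref{A1}. By the range description \eqref{18}, membership $(\zeta B+g,-h)\in\im(L)$ forces $\zeta B+g-\ell_0(K_0h)\in M=\im(1-Q_0)$; applying the projection $1-P_M$ onto $M^\perp=\spann\{B\}$ gives the scalar equation $(1-P_M)(\ell_0(K_0h)-g)=\zeta B$, which is precisely the stated characterization of $\zeta$. Once $\zeta$ is fixed, the solution formula \eqref{17} yields $\dot z(0)=\Pi_0(\cdot,0)\xi-K_0h$ with $(1-Q_0)\xi=\zeta B+g-\ell_0(K_0h)\in M$, and the remaining degree of freedom—the $\kk(1-Q_0)=\spann\{B\}$ component of $\xi$—is pinned down by the requirement $\dot z(0)\in\kk(L)^\perp$, i.e. $P_k\dot z(0)=0$, which reads $\mathpzc{k}(\Pi_0(\cdot,0)\xi)=\mathpzc{k}(K_0h)$. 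This reproduces the asserted formulas for $\xi$ and completes the identification.

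The main obstacle I anticipate is the careful evaluation of $D_2\bar T(0,0,\Phi)$, specifically the extraction of $g$: one must pass back from $\bar\ell_*$ to $\ell_*$ via $\ell_*(\ve u)=\ve\bar\ell_*(\ve,u)$ and recognize that the hypothesis $D_u\ell_*(0)=0$ is exactly what removes the otherwise-present linear contribution, leaving the quadratic form $\tfrac12D_u^2\ell_*(0)$. A secondary point to check is that the geometric simplicity of the eigenvalue $1$ guarantees $B\notin M$, so that $\spann\{B\}$ is a genuine complement of $M$ and the projection in the equation for $\zeta$ is nondegenerate.
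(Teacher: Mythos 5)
Your proposal is correct and is essentially the paper's own argument: the paper differentiates the unreduced identity $Lu(\ve)=T(\lambda(\ve),u(\ve))$ twice at $\ve=0$, which, because $u(\ve)=\ve\big(\Pi_0(\cdot,0)B+z(\ve)\big)$, amounts to exactly your single differentiation of the reduced identity $Lv(\ve)=\bar T(\lambda(\ve),\ve,v(\ve))$ and produces the same equation $Lz'(0)=(\zeta B+g,-h)$, which is then solved precisely as you do via \eqref{17}, \eqref{18}, the fact $P_MB=0$, and the normalization $z'(0)\in\kk(L)^\perp$ giving $\mathpzc{k}(\Pi_0(\cdot,0)\xi)=\mathpzc{k}(K_0h)$. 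The one caveat—shared by the paper's proof, which simply asserts the remainder bounds—is that first-order Taylor expansion of the $C^2$ maps $\lambda$ and $z$ yields remainders $O(\ve^2)$, i.e.\ $o(\ve)$, rather than the stated $o(\ve^2)$, so this imprecision lies in the statement itself and not in your argument.
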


\begin{proof}
We plug the twice continuously differentiable functions $\lambda=\lambda(\ve)$ and $u=u(\ve)$ provided by Theorem \ref{A2} into the equation $Lu=T(\lambda,u)$ which we then differentiate twice with respect to $\varepsilon$. Evaluating the result at $\varepsilon=0$ and using $D_u\ell_*(0)=0$ together with $\ell_0(\Pi_0(\cdot,0)B)=B$, we obtain
\bqn\label{21}
L z'(0)=\big(\lambda'(0)B+g,-h\big)
\eqn
with dashes denoting derivatives with respect to $\varepsilon$ and $g,h$ as given in the statement. Hence, from \eqref{17},
$$
y:=\lambda'(0)B+g-\ell_0(K_0h)\in M
$$
and thus, since $P_MB=0$,
$$
(1-P_M)\big(-g+\ell_0(K_0h)\big)=\lambda'(0)B
$$
from which the formula for $n(\varepsilon)$ follows by setting $\zeta:=\lambda'(0)$. Next, if $\varrho\in E_\varsigma$ is an arbitrarily fixed solution to $(1-Q_0)\varrho=y$, then any other $\eta\in E_\varsigma$ with $(1-Q_0)\eta=y$ can be written uniquely in the form $\eta_\alpha:=\eta=\varrho+\alpha B$ for some $\alpha\in\R$. Writing $w:=z'(0)\in\E_1$ we have $w=\Pi_0(\cdot,0)\eta_\alpha-K_0 h$ by \eqref{21} and \eqref{17} with $\alpha\in\R$ determined by the constraint that $w$ must belong to $\kk(L)^\perp$. This is obtained by observing that
$$
0=P_kw=\big(\mathpzc{k}(\Pi_0(\cdot,0)\varrho)+\alpha-\mathpzc{k}(K_0h)\big)\Pi_0(\cdot,0)B\ ,
$$
that is, $\alpha=\mathpzc{k}(K_0h)-\mathpzc{k}(\Pi_0(\cdot,0)\varrho)$. For this $\alpha$ we put $\xi:=\eta_\alpha$ and get 
$$
u(\varepsilon)=\varepsilon\Pi_0(\cdot,0)B+\varepsilon^2\big(\Pi_0(\cdot,0)\xi-K_0h\big)+\varepsilon u_*(\varepsilon)
$$
with $\|u_*(\varepsilon)\|_{\E_1}= o(\varepsilon^2)$ as $\vert\varepsilon\vert\rightarrow 0$.

\end{proof}

\subsection{Positive Solutions}

We shall give conditions under which the nontrivial equilibrium solutions are positive. To this end we suppose that
\bqn\label{22}
E_0\ \text{is ordered by a closed convex cone}\ E_0^+\ .
\eqn
Then the interpolation spaces $E_\sigma$ are given their natural order induced by the cone $E_\sigma^+:=E_\sigma\cap E_0^+$. For information on positive and strongly positive operators we refer to \cite{DanersKochMedina,Schaefer}.
If $(n,u)$ is a solution to \eqref{7}, \eqref{77} we say that $u$ is a positive equilibrium provided that $u(a)\in E_0^+$ for $a\in J$.\\

Before turning to positive solutions we remark the following about the assumptions on $Q_0$ in Theorem~\ref{A2}.

\begin{rem}
Assume that the parabolic evolution operator $\Pi_0(a,\sigma)$ corresponding to $\A_0$ in \eqref{10} is positive, that is, $\Pi_0(a,\sigma)\in\ml_+(E_0)$ for $0\le  \sigma\le a<a_m$. If also $\ell_0\in\ml_+(\E_1,E_\vartheta)$ in \eqref{9}, then $Q_0\in\mathcal{K}_+(E_\varsigma)$ and thus the Krein-Rutman theorem entails that the spectral radius $r(Q_0)$ is (if nonzero) an eigenvalue of finite multiplicity with a positive eigenvector $B\in E_\varsigma^+$. Hence, in this case the assumption in Theorem \ref{A2} that the normalized spectral radius $r(Q_0)=1$ is an eigenvalue is no severe restriction. More restrictive is the assumption that this eigenvalue has geometric multiplicity 1. However, if $Q_0$ is strongly positive or irreducible, then $r(Q_0)=1$ has geometric multiplicity 1, see for instance \cite[Sect.12]{DanersKochMedina} and \cite[App.3.2]{Schaefer}. We also refer to the next section for concrete examples.
\end{rem}

\begin{prop}\label{A4}
Suppose the assumptions of Theorem \ref{A2} and \eqref{22}. In addition,
\bqn\label{u}
\begin{aligned}
&\text{for each}\ u\in\Sigma \ \text{let}\ \A(u)\ \text{generate a positive parabolic}\\
& \text{evolution operator}\ \Pi_u(a,\sigma)\,,\, 0\le\sigma\le a<a_m\, ,\ \text{on}\ E_0\ .
\end{aligned}
\eqn
If $(n(\ve),u(\ve))$ is the branch of solutions from Theorem \ref{A2}, then $u(\varepsilon)$ is positive provided that $\varepsilon\in (0,\varepsilon_0)$ is such that
\bqn\label{23}
\frac{1}{\varepsilon}\gamma u(\varepsilon)=B+\gamma z(\varepsilon)\in E_\varsigma^+\ .
\eqn
In particular, if $B$ belongs to the interior of $E_\varsigma^+$, then $u(\ve)$ is positive for $\varepsilon>0$ sufficiently small.
\end{prop}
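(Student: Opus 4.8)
The plan is to exploit the representation of a solution through the evolution operator generated by its own (frozen) coefficients, exactly as in \eqref{5}. Fix $\ve\in(0,\ve_0)$ and write $u:=u(\ve)\in\Sigma$. Since $u$ solves $\partial_a u+\A(u,a)u=0$ on $J$ with trace $\gamma u=u(0)$, freezing the first argument of $\A(u,\cdot)$ turns this into the \emph{linear} non-autonomous problem $\partial_a v+\A(u,a)v=0$, $v(0)=\gamma u$, whose unique solution is, by assumption \eqref{u} and the evolution-operator representation analogous to \eqref{11} (with vanishing inhomogeneity), $v(a)=\Pi_u(a,0)\gamma u$. By uniqueness, $u(a)=\Pi_u(a,0)\gamma u$ for all $a\in J$. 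This is the structural identity driving everything: once $\gamma u$ is shown to be positive, positivity of the whole age profile will follow from positivity of $\Pi_u$.

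Next I would compute the trace. From the form $u(\ve)=\ve\big(\Pi_0(\cdot,0)B+z(\ve)\big)$ furnished by Theorem \ref{A2} and the fact that $\Pi_0(0,0)=\mathrm{id}$, one has $\gamma\big(\Pi_0(\cdot,0)B\big)=B$, whence $\gamma u(\ve)=\ve\big(B+\gamma z(\ve)\big)$ and therefore $\tfrac1\ve\gamma u(\ve)=B+\gamma z(\ve)$, which is precisely the quantity appearing in \eqref{23}. Now suppose $\ve\in(0,\ve_0)$ satisfies \eqref{23}, i.e. $B+\gamma z(\ve)\in E_\varsigma^+$. Because $E_\varsigma^+$ is a cone and $\ve>0$, it follows that $\gamma u(\ve)=\ve\big(B+\gamma z(\ve)\big)\in E_\varsigma^+\subset E_0^+$. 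Since $u(\ve)\in\Sigma$, assumption \eqref{u} gives $\Pi_{u(\ve)}(a,0)\in\ml_+(E_0)$ for $0\le a<a_m$, so the identity from the first step yields $u(\ve)(a)=\Pi_{u(\ve)}(a,0)\gamma u(\ve)\in E_0^+$ for every $a\in J$; that is, $u(\ve)$ is a positive equilibrium.

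For the final assertion I would invoke continuity of the trace. By \eqref{6} we have $\gamma\in\ml(\E_1,E_\varsigma)$, and $z$ is continuous with $z(0)=0$, so $\gamma z(\ve)\to0$ in $E_\varsigma$ and hence $B+\gamma z(\ve)\to B$ as $\ve\to0$. If $B$ lies in the interior of $E_\varsigma^+$, this interior being open, $B+\gamma z(\ve)$ remains in it for all sufficiently small $\ve>0$; in particular \eqref{23} holds, and the preceding paragraph shows $u(\ve)$ is positive.

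The hard part will be the first step: one must ensure that the abstract solution produced by Theorem \ref{A2} (a solution of $Lu=T(\lambda,u)$) genuinely admits the pointwise-in-age representation $u(a)=\Pi_u(a,0)\gamma u$, which rests on the well-posedness and uniqueness encoded in \eqref{u} for the frozen-coefficient linear problem. Once that identity is secured, the positivity bookkeeping is routine, since the relevant cone is preserved both by the positive operator $\Pi_u(a,0)$ and by multiplication with the positive scalar $\ve$.
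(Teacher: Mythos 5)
Your proof is correct and follows essentially the same route as the paper's: the frozen-coefficient representation $u(\ve)(a)=\Pi_{u(\ve)}(a,0)\gamma u(\ve)$, positivity of $\Pi_{u(\ve)}$ from \eqref{u} together with the cone property of $E_\varsigma^+$, and, for the final assertion, the convergence $z(\ve)\to 0$ in $\E_1\hookrightarrow BUC(J,E_\varsigma)$ combined with openness of the interior of $E_\varsigma^+$. The paper states all of this in one sentence; your write-up merely makes the same steps explicit.
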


\begin{proof}
This follows from the fact that under the stated assumptions any solution $(n,u)$ to \eqref{7}, \eqref{77} satisfies
$$
u(a)=\Pi_u(a,0)u(0)\ ,\quad a\in J\ ,
$$
hence $u(a)\in E_\varsigma^+$ for $a\in J$ if $\gamma u=u(0)\in E_\varsigma^+$, and due to the fact that $z(\varepsilon)\rightarrow 0$ in $\E_1\hookrightarrow BUC(J,E_\varsigma)$ as $\varepsilon\rightarrow 0$.
\end{proof}

\begin{rem}\label{B2}
Recall that according to \cite[II.Cor.4.4.2, II.Thm.6.4.2]{LQPP}, $\A(u)$ generates a positive parabolic evolution operator $\Pi_u(a,\sigma)$ on $E_0$
provided that $\A(u)\in C^\rho(J,\mathcal{H}(E_1,E_0))$
for some $\rho>0$ and $-\A(u)(a)$ is resolvent positive for each $a\in J$. In this case, a solution $u\in\E_1$ to \eqref{7}, \eqref{77} possesses additional regularity, see \cite[II.Thm.1.2.1, II.Thm.5.3.1]{LQPP}.
\end{rem}

Under some symmetry conditions on $\A$ and $\ell$ the equilibrium solutions provided by Theorem \ref{A2} are positive for each parameter value $n(\ve)$, $-\ve_0<\ve<\ve_0$. More precisely, we have:

\begin{prop}\label{B5}
Suppose the assumptions of Theorem \ref{A2}, \eqref{22}, and \eqref{u}. Let $\A(u)=\A(-u)$ and $\ell(u)=-\ell(-u)$ for $u\in\Sigma$. Given $u\in\Sigma$ set $Q_uw:=\ell(\Pi_u(\cdot,0)w)$, $w\in E_\alpha$, and suppose that $Q_u\in\ml_+(E_\alpha)$ for some $\alpha\in[0,\varsigma]$. Moreover, suppose that any positive eigenvalue of $Q_u$ has geometric multiplicity 1 and possesses a positive eigenvector.
Then
$$C^+:=\big\{\big(n(\ve),u(\ve)\big)\,;\, \gamma u(\ve)\in E_0^+\big\}\cup\big\{\big(n(\ve),-u(\ve)\big)\,;\, \gamma u(\ve)\not\in E_0^+\big\}
$$
consists of positive equilibria only.
\end{prop}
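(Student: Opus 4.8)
The plan is to exploit the oddness of the data, pairing $u(\ve)$ with $-u(\ve)$ and reading off the sign of the trace $\gamma u(\ve)$ from an eigenvalue problem for the positive operator $Q_{u(\ve)}$. First I would record that $\A(u)=\A(-u)$ and $\ell(u)=-\ell(-u)$ make the problem equivariant under $u\mapsto -u$: if $(n,u)$ solves \eqref{7}, \eqref{77}, then $\partial_a(-u)+\A(-u,a)(-u)=-(\partial_au+\A(u,a)u)=0$ and $(-u)(0)=-n\ell(u)=n\ell(-u)$, so $(n,-u)$ is again a solution, and $-u(\ve)\in\Sigma$ since $\Sigma$ is a ball about $0$. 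Thus both candidates appearing in $C^+$ are genuine solutions. Next I would invoke \eqref{u} exactly as in the proof of Proposition \ref{A4}: every solution satisfies $u=\Pi_u(\cdot,0)u(0)$, whence by the very definition of $Q_u$ we get $\ell(u)=\ell(\Pi_u(\cdot,0)u(0))=Q_u(u(0))$, and therefore $u(0)=n\ell(u)=nQ_u(u(0))$. Since the branch lies in $\R^+\times\E_1$ we have $n(\ve)>0$, so $\gamma u(\ve)=u(\ve)(0)$ is an eigenvector of $Q_{u(\ve)}\in\ml_+(E_\alpha)$ for the \emph{positive} eigenvalue $1/n(\ve)$; here $\gamma u(\ve)\in E_\varsigma\hookrightarrow E_\alpha$, and $\gamma u(\ve)\neq 0$ because $u(\ve)=\Pi_{u(\ve)}(\cdot,0)\gamma u(\ve)$ would otherwise vanish.

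The heart of the argument is then a sign dichotomy. By hypothesis $1/n(\ve)$ has geometric multiplicity one and a positive eigenvector $\phi\in E_\alpha^+$, so its eigenspace is $\spann\{\phi\}$ and $\gamma u(\ve)=t\phi$ for some $t\neq 0$. As $E_0^+$ is a closed convex cone and $\phi\in E_\alpha^+\subset E_0^+\setminus\{0\}$, I would conclude: if $t>0$ then $\gamma u(\ve)\in E_0^+$, while if $t<0$ then $\gamma u(\ve)\notin E_0^+$ but $-\gamma u(\ve)\in E_0^+$. In the first case $(n(\ve),u(\ve))\in C^+$, and positivity of $\Pi_{u(\ve)}$ from \eqref{u} gives $u(\ve)(a)=\Pi_{u(\ve)}(a,0)\gamma u(\ve)\in E_0^+$ for all $a\in J$. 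In the second case $(n(\ve),-u(\ve))\in C^+$ is the reflected solution from the first paragraph, has trace $-\gamma u(\ve)\in E_0^+$, and hence $(-u(\ve))(a)=\Pi_{-u(\ve)}(a,0)(-\gamma u(\ve))\in E_0^+$ for all $a\in J$. Either way the chosen representative is a positive equilibrium, which is the assertion.

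The main obstacle, and the step I would treat most carefully, is precisely the passage from ``$\gamma u(\ve)$ lies on a one-dimensional positive eigenray'' to the \emph{clean} alternative matching the two defining clauses of $C^+$. This requires that the cone $E_0^+$ be pointed, i.e. $E_0^+\cap(-E_0^+)=\{0\}$, so that $t\phi\in E_0^+$ with $\phi\neq 0$ forces $t\ge 0$; without pointedness the membership test $\gamma u(\ve)\in E_0^+$ need not decide the sign of $t$. I would also verify the compatibility of the function spaces, namely that the trace $\gamma u(\ve)\in E_\varsigma$ indeed lies in the domain $E_\alpha$ of $Q_{u(\ve)}$ (valid since $\alpha\le\varsigma$) and that $E_\alpha^+\subset E_0^+$, so that positivity of the trace detected in $E_\alpha$ transfers to $E_0$, where the notion of positive equilibrium is formulated.
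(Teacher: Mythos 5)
Your proposal is correct and is essentially the paper's own proof: the trace $\gamma u(\ve)$ solves $\gamma u(\ve)=n(\ve)Q_{u(\ve)}\gamma u(\ve)$, so it is an eigenvector for the positive eigenvalue $1/n(\ve)$ and hence a nonzero multiple $t\phi$ of the positive eigenvector, and the sign dichotomy in $t$ together with the symmetry of $\A,\ell$ and the positivity of the evolution operators from \eqref{u} gives the claim, exactly as in the paper (where $t\phi$ is written $\alpha_\ve B_{u(\ve)}$). Your pointedness caveat is harmless: an order cone is conventionally proper, i.e. $E_0^+\cap(-E_0^+)=\{0\}$, and one can in any case sidestep the issue by arguing clause by clause of $C^+$ rather than by the sign of $t$ --- if $\gamma u(\ve)\in E_0^+$ then positivity of $\Pi_{u(\ve)}$ alone yields positivity of $u(\ve)$, while if $\gamma u(\ve)\notin E_0^+$ then $t>0$ is impossible, so $t<0$ and $-\gamma u(\ve)=\vert t\vert\phi\in E_0^+$, making $-u(\ve)$ a positive equilibrium.
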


\begin{proof} Let $\ve\in (-\ve_0,\ve_0)\setminus\{ 0\}$. Since $(n(\ve),u(\ve))$ satisfies
$$
u(\ve)=\Pi_{u(\ve)}(\cdot,0)\gamma u(\ve)\ ,\quad \gamma u(\ve)=n(\ve) Q_{u(\ve)}\gamma u(\ve)\ ,
$$
it follows that $n(\ve)^{-1}>0$ is an eigenvalue of $Q_{u(\ve)}$ with eigenvector $\gamma u(\ve)$. By assumption there is a corresponding positive eigenvector $B_{u(\ve)}$ and $\alpha_\ve\in\R\setminus\{0\}$ such that $\gamma u(\ve)=\alpha_\ve B_{u(\ve)}$. If $\alpha_\ve>0$ then $\gamma u(\ve)\in E_0^+$ and thus $u(\ve)(a)\in E_0^+$ for each $a\in J$. Otherwise, if $\alpha_\ve<0$, then $-u(\ve)$ is a positive equilibrium solution with parameter value $n(\ve)$ due to $\gamma(-u(\ve))=-\alpha_\ve B_{u(\ve)}\in E_0^+$ and owing to the symmetry conditions put on $\A$ and $\ell$.
\end{proof}

Proposition \ref{A4} guarantees that a branch of positive equilibria bifurcates from the branch of trivial equilibria $(n,u)=(n,0)$, $n\in\R$, at the critical value $n=1$. Near the critical value $n=1$ the set of $n$ values corresponding to positive equilibria on the branch from Theorem \ref{A2} consists of $n$ values greater (i.e. supercritical bifurcation) or less (i.e. subcritical bifurcation) than 1 depending on the sign of $\lambda(\varepsilon)=n(\varepsilon)-1$ for $\varepsilon>0$ sufficiently small. If $m\ge 2$ in Theorem \ref{A2}, this ``direction of bifurcation'', that is, the cases $n(\varepsilon)>1$ and $n(\varepsilon)<1$ for $\varepsilon>0$ small, depends on the sign of $\lambda'(0)=\zeta$ (if nonzero), which in turn depends on $\big(D_u(A_*(0)\big)\Pi_0(\cdot,0)B$ and $D_u^2\ell_*(0)[\Pi_0(\cdot,0)B,\Pi_0(\cdot,0)B]$ according to Proposition \ref{A3}. Further, Proposition \ref{B5} warrants under the symmetry conditions imposed that for any of the values $n(\ve)\ne 1$ there is a positive nontrivial equilibrium. Examples to which Propositions \ref{A4} and \ref{B5} apply will be given in the next section. \\

Under additional assumptions we can get more information about the positive equilibria and the direction of bifurcation. For simplicity we demonstrate this when $\ell$ is given by
\bqn\label{24}
\ell(u):=\int_0^{a_m}b(u,a) u(a)\,\rd a\ ,\quad u\in\Sigma\ ,
\eqn
where $b\in C^m(\Sigma,L_{p'}^+(J))$ with $1/p+1/p'=1$ and $b(u,a):=b(u)(a)$. Then \eqref{19B} and \eqref{20} clearly hold by putting
\bqn\label{24B}
\ell_0(u):=\int_0^{a_m}b_0(a) u(a)\,\rd a\ ,\quad \ell_*(u):=\int_0^{a_m}b_*(u,a) u(a)\,\rd a\ 
\eqn
for $b_0(a):=b(0,a)$ and $b_*(u,a):=b(u,a)-b_0(a)$. Let the assumptions of Proposition \ref{A4} be satisfied and suppose that there exists $\ve_*\in (0,\ve_0)$ such that \eqref{23} holds for $\ve\in (0,\ve_*)$. Let \eqref{u} hold and, given $u\in\Sigma$, assume that
$$
Q_u:=\int_0^{a_m}b(u,a) \Pi_u(a,0)\,\rd a\ ,
$$
belongs to $\mathcal{K}_+(E_\varsigma)$. Note that $Q_u$ for $u=0$ coincides with $Q_0$ defined in \eqref{16}. Set $$N_i:=\inf_{u\in\Gamma} r(Q_u)\ ,\quad N_s:=\sup_{u\in\Gamma} r(Q_u)\ ,$$
where $\Gamma:=\big\{u(\ve)\,;\, \ve\in [0,\ve_*)\big\}$. Then $0\le N_i\le 1\le N_s\le\infty$ since $r(Q_0)=1$. Moreover,
\bqn\label{26}
n\, r(Q_u)\ge 1\quad\text{for}\quad (n,u)\in \Lambda:=\big\{\big(n(\ve),u(\ve)\big)\,;\, \ve\in [0,\ve_*)\big\}\ .
\eqn
Indeed, given $(n,u)\in\Lambda\setminus\{(1,0)\}$ we have $u(a)=\Pi_u(a,0)u(0)$ for $a\in J$ and $$0\not= u(0)=n\ell(u)=n Q_u u(0)\ ,$$
that is, $1/n$ is an eigenvalue of $Q_u\in\ml (E_\varsigma)$, whence $r(Q_u)\ge 1/n$.
Suppose in addition that
\bqn\label{266}
\text{for each}\ u\in\Sigma\,,\,
r(Q_u)>0\ \text{is the only eigenvalue of}\ Q_u\in \mathcal{K}_+(E_\varsigma)\ \text{with positive eigenvector}\ .
\eqn
This holds, e.g., if $Q_u$ is strongly positive. Then 
\bqn\label{27}
n\, r(Q_u)=1\ ,\quad (n,u)\in \Lambda\ .
\eqn
Furthermore, letting
$$
[\sigma_i,\sigma_s]:=cl_\R\big\{n(\ve)\,;\, \ve\in [0,\ve_*)\big\}\ ,$$
it readily follows from \eqref{27} that
\bqn\label{28}
0\le \sigma_i=\frac{1}{N_s}\le 1\le \sigma_s=\frac{1}{N_i}\le \infty\ .
\eqn
Therefore, under the assumptions of Proposition \ref{A4}, \eqref{24}, \eqref{266}, and if $r(Q_u)\le 1$ for $u\in\Sigma$, we have $N_s\le 1$, hence $1=N_s=\sigma_i$ and bifurcation must be supercritical in this case. Again, we refer to the next section for concrete examples.

\section{Applications to Population Dynamics}\label{section 3}

We now apply the obtained results to problem \eqref{7}, \eqref{77}. Suppose \eqref{22} and that the interior $\textrm{int}(E_\varsigma^+)$ of $E_\varsigma^+$ is nonempty. Let $\A$ be of the form
$$
\A(u,a):=\mu(u,a)+A(u,a)
$$
and
$$
\ell(u):=\int_0^{a_m} b(u,a) u(a)\, \rd a\ .
$$
As observed in the previous section, $\ell$ satisfies \eqref{19B}, \eqref{20} with \eqref{24B} provided that
\bqn\label{36}
b\in C^m\big(\Sigma,L_{p'}^+(J)\big)\ ,\quad b_0:=b(0)\not\equiv 0\ ,
\eqn
for some $m\ge 1$ and some ball $\Sigma$ in $\E_1$ centered at 0 with radius $R_0>0$. Moreover, regarding Proposition~\ref{A3} we note that $D_u\ell_*(0)=0$. Let $\alpha\in [0,\varsigma)$ and let $\Phi$ be the ball in $E_\alpha$ with center 0 and radius $R>0$. Let
\bqn\label{30}
A\in C^m(\Phi,\ml(E_1,E_0))
\eqn
be such that 
\bqn\label{35}
-A(w)\ \text{generates an analytic semigroup on}\ E_0 \ \text{and is resolvent positive for each}\ w\in \Phi\ .
\eqn
Making $R_0>0$ smaller if necessary it follows from the compact embedding $E_{\varsigma}\dhr E_\alpha$ and \eqref{6} analogously to \cite[Thm.6.2,Thm.6.4]{AmannEscherII} that the Nemitskii operator of $A$ (again labeled $A$), given by
$$
A(u)(a):=A(u(a))\ ,\quad a\in J\ ,\quad u\in \Sigma\ ,
$$
belongs to $C^m(\Sigma,L_\infty(J,\ml(E_1,E_0)))$. Since \mbox{$\E_1\hookrightarrow BUC^{\varsigma-\delta}(J,E_\delta)$} for $\delta\in[0,\varsigma)$ owing to \eqref{6} and the interpolation inequality \cite[I.Thm.2.11.1]{LQPP}, we deduce from \eqref{35} and Remark \ref{B2} that
$$
[a\mapsto A(u(a))]\in C^{\varsigma-\alpha}(J,\mathcal{H}(E_1,E_0))
$$
generates a positive parabolic evolution operator $U_{A(u)}(a,\sigma)$ on $E_0$ for each $u\in \Sigma$. 
Set $A_0:=A(0)$ and suppose there exist $\omega_0\ge 0$ and $\phi\in [0,\pi/2)$ such that $\omega_0>\mathrm{type}(-A_0)$ and
\bqn\label{31}
\omega_0+A_0\in \mathcal{BIP}(E_0;\phi)\ .
\eqn
Moreover, suppose that
\bqn\label{t}
e^{-a A_0}\in\ml(E_\varsigma)\ \text{is strongly positive for}\ a>0\ .
\eqn
If $\mu$ is a function such that
\bqn\label{32}
[u\mapsto\mu(u,\cdot)]\in C^m(\Sigma,L_\infty^+(J))\ ,
\eqn
we set $\mu_0(a):=\mu(0,a)$ for $a\in J$ and further suppose that
\bqn\label{33}
\mu_0\in BUC(J)\ ,\quad \inf_{a\in J}\,\mu_0(a)>\omega_0\ 
\eqn
and
\bqn\label{34}
\lim_{a\rightarrow\infty}\,\mu_0(a)\ \text{exists if}\ a_m=\infty\ .
\eqn
Put
$
\A(u,a):=\mu(u,a)+A(u(a))$ for $a\in J$, $u\in \Sigma$
and note that
$$
\A_0(a):=\A(0,a)=\mu_0(a)+A_0\ ,\quad a\in J\ .
$$
Clearly, $\A(u,\cdot)$ generates a positive parabolic evolution operator $\Pi_{u}(a,\sigma)$ on $E_0$ for each $u\in \Sigma$ given by
$$
\Pi_u(a,\sigma):=e^{-\int_\sigma^a \mu(u,r)\rd r}\, U_{A(u)}(a,\sigma)\ ,\quad 0\le\sigma\le a<a_m\ .
$$
From \eqref{31}, \eqref{33}, and \cite[III.Cor.4.8.6]{LQPP} it follows that we may apply Remark \ref{5001}.b) to conclude that \eqref{3001} holds true. Proposition \ref{B1} now guarantees that $\A$ satisfies \eqref{10} provided $E_0$ is an UMD space.

Finally, let $Q_0\in\mathcal{K}_+(E_{\varsigma})$ be given by
$$
Q_0:=\int_0^{a_m}b_0(a)\,e^{-\int_0^a \mu_0(r)\rd r}\, e^{-a A_0}\,\rd a
$$
and note that \eqref{t} and $b_0\not\equiv 0$ imply that $Q_0\in\mathcal{K}(E_\varsigma)$ is strongly positive, hence irreducible (see \cite[Sect.12]{DanersKochMedina}). In particular, since the interior of $E_\varsigma^+$ is assumed to be nonempty, it follows from \cite[Thm.12.3]{DanersKochMedina} that $r(Q_0)>0$ is a simple eigenvalue of $Q_0$ with a corresponding eigenvector $B\in \textrm{int}(E_\varsigma^+)$ (and this is the only eigenvalue with a positive eigenvector). Let then $b_0$ be normalized such that $r(Q_0)=1$.\\

Combining Proposition \ref{B1}, Remark \ref{5001}, Proposition \ref{A4}, and Theorem \ref{A2} we obtain:

\begin{thm}\label{B10}
Let $E_0$ be a UMD space satisfying \eqref{22} and let $\textrm{int}(E_\varsigma^+)\ne \emptyset$. Suppose \eqref{36}-\eqref{34}. Then the problem
\begin{align*}
&\partial_a u+A(u(a))u+\mu(u,a)u=0\ ,\quad a\in J\ ,\\
&u(0)=n\int_0^{a_m} b(u,a)u(a)\,\rd a\ ,
\end{align*}
has a branch of nontrivial solutions $\big(n(\ve),u(\ve)\big)\in\R^+\times\E_1$, $0<\vert\ve\vert<\ve_0$, of the form
$$
n(\ve)=1+\lambda(\ve)\ ,\quad u(\ve)=\ve\big(\Pi_0(\cdot,0)B+z(\ve)\big)
$$
such that $\lambda:(-\ve_0,\ve_0)\rightarrow\R$, $z:(-\ve_0,\ve_0)\rightarrow \E_1$ are $m$-times continuously differentiable and $\lambda(0)=0$, $z(0)=0$. If $\ve>0$ is sufficiently small, then $u(\ve)(a)\in E_{\varsigma}^+$ for $a\in J$.
\end{thm}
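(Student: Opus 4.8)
The plan is to recognize that Theorem~\ref{B10} is a corollary obtained by verifying the hypotheses of Theorem~\ref{A2} and Proposition~\ref{A4} within the concrete setting of Section~\ref{section 3}, and then simply invoking those abstract results. Almost all of the genuine work has already been carried out in the expository paragraphs immediately preceding the statement; the proof is therefore largely a matter of collecting these verifications in the right order and checking that no hypothesis has been left unaddressed.

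First I would record the structural assumptions: with $\A(u,a):=\mu(u,a)+A(u(a))$ and $\ell(u):=\int_0^{a_m}b(u,a)u(a)\,\rd a$, condition \eqref{36} guarantees via \eqref{24B} that $\ell$ decomposes as in \eqref{19B} and that the remainder $\ell_*$ satisfies the scaling and smoothness requirements \eqref{20}; the linear part $\ell_0$ lies in $\ml(\E_1,E_\vartheta)$ for suitable $\vartheta$, giving \eqref{9}. Next I would verify \eqref{19}, namely that $\A\in C^m(\Sigma,L_\infty(J,\ml(E_1,E_0)))$: this is exactly the Nemitskii-operator regularity statement deduced from \eqref{30}, the compact embedding $E_\varsigma\dhr E_\alpha$, and \cite[Thm.6.2,Thm.6.4]{AmannEscherII}, combined with the fact that $\mu$ contributes a bounded multiplication term under \eqref{32}. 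I would then confirm that $\A_0=\A(0)$ satisfies \eqref{10}: here the argument assembles \eqref{31}, \eqref{33}, \cite[III.Cor.4.8.6]{LQPP} and Remark~\ref{5001}(b) to obtain maximal $L_p$-regularity \eqref{3001}, uses \eqref{33} and \eqref{34} for the generation and limit conditions \eqref{1001}, \eqref{400a}, and then appeals to Lemma~\ref{B1} (with $E_0$ a UMD space) to conclude \eqref{10}.

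With \eqref{9}, \eqref{19}, \eqref{19B}, \eqref{20} in hand, the remaining abstract hypothesis of Theorem~\ref{A2} is the spectral condition on $Q_0$. This is supplied by the explicit formula for $Q_0$ together with the strong positivity \eqref{t}: since $b_0\not\equiv0$ and $e^{-aA_0}$ is strongly positive, $Q_0\in\mathcal{K}_+(E_\varsigma)$ is strongly positive, hence irreducible, and \cite[Thm.12.3]{DanersKochMedina} (using $\mathrm{int}(E_\varsigma^+)\ne\emptyset$) yields that $r(Q_0)>0$ is a simple eigenvalue with eigenvector $B\in\mathrm{int}(E_\varsigma^+)$. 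After the normalization $r(Q_0)=1$, the hypotheses of Theorem~\ref{A2} are met, and it produces the branch $(n(\ve),u(\ve))$ of the stated form with the claimed regularity of $\lambda$ and $z$.

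Finally, for positivity I would invoke Proposition~\ref{A4}: assumption \eqref{u} holds because $\A(u,\cdot)$ generates the positive parabolic evolution operator $\Pi_u(a,\sigma)=e^{-\int_\sigma^a\mu(u,r)\,\rd r}U_{A(u)}(a,\sigma)$, the positivity of $U_{A(u)}$ following from \eqref{35} and Remark~\ref{B2}. Since $B\in\mathrm{int}(E_\varsigma^+)$, the second assertion of Proposition~\ref{A4} gives $\gamma u(\ve)=\ve(B+\gamma z(\ve))\in E_\varsigma^+$ for small $\ve>0$, whence $u(\ve)(a)=\Pi_{u(\ve)}(a,0)\gamma u(\ve)\in E_\varsigma^+$ for all $a\in J$. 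The step I expect to require the most care is the verification of \eqref{10}, since maximal $L_p$-regularity is the most delicate of the abstract hypotheses and relies on correctly chaining the UMD/$\mathcal{BIP}$ machinery; everything else is bookkeeping that the preceding discussion has essentially completed.
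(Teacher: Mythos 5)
Your proposal is correct and follows essentially the same route as the paper: the paper's proof of Theorem~\ref{B10} is precisely the chain of verifications in the paragraphs preceding it (Nemitskii regularity of $A$ via \cite[Thm.6.2,Thm.6.4]{AmannEscherII}, maximal $L_p$-regularity of $\A_0$ through \eqref{31}, \eqref{33}, Remark~\ref{5001}(b) and Lemma~\ref{B1}, strong positivity of $Q_0$ plus \cite[Thm.12.3]{DanersKochMedina} for the simple eigenvalue with eigenvector $B\in\mathrm{int}(E_\varsigma^+)$), followed by invoking Theorem~\ref{A2} for the branch and Proposition~\ref{A4} for positivity, exactly as you do.
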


If, in addition, the symmetry conditions
\bqn\label{40}
A(-u)=A(u)\ ,\quad \mu(-u,\cdot)=\mu(u,\cdot)\ ,\quad b(-u,\cdot)=b(u,\cdot)
\eqn
hold for $u\in \Sigma$ and if
$$
Q_u:=\int_0^{a_m}b(u,a)\,e^{-\int_0^a\mu(u,r)\rd r}\, U_{A(u)}(a,0)\,\rd a
$$
for $u\in\Sigma$ is such that $Q_u\in\ml_+(E_{\varsigma})$ and
\bqn\label{41}
\begin{aligned}
&\text{any positive eigenvalue of}\ Q_u\ \text{has geometric multiplicity}\ 1 \\
& \text{and possesses a corresponding positive eigenvector}\ ,
\end{aligned}
\eqn
then it follows from Proposition \ref{B5}:

\begin{cor}\label{B11}
Suppose the assumptions of Theorem \ref{B10} together with \eqref{40}, \eqref{41}. Then, for each parameter value $n(\ve)$, $\ve\in (-\ve_0,\ve_0)\setminus\{0\}$ provided by Theorem \ref{B10} there exists a positive nontrivial equilibrium solution of the form $u(\ve)$ or $-u(\ve)$.
\end{cor}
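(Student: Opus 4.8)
The plan is to recognize the corollary as a direct specialization of Proposition \ref{B5} to the concrete situation built up for Theorem \ref{B10}, so the whole task reduces to checking that each hypothesis of Proposition \ref{B5} is in force and then invoking it. First I would recall that Theorem \ref{B10} already secures the assumptions of Theorem \ref{A2} together with \eqref{22}, and that the construction preceding Theorem \ref{B10} produces, for every $u\in\Sigma$, the positive parabolic evolution operator
\[
\Pi_u(a,\sigma)=e^{-\int_\sigma^a \mu(u,r)\,\rd r}\, U_{A(u)}(a,\sigma)\ ,\qquad 0\le\sigma\le a<a_m\ ,
\]
so that \eqref{u} holds. Thus the standing hypotheses of Proposition \ref{B5} are available without further work.

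Next I would verify the two symmetry requirements. Writing $\A(u,a)=\mu(u,a)+A(u(a))$, the relations $\mu(-u,\cdot)=\mu(u,\cdot)$ and $A(-w)=A(w)$ from \eqref{40} give, at the level of the Nemitskii operator, $A(-u)(a)=A(-u(a))=A(u(a))$, whence $\A(-u)=\A(u)$. For $\ell(u)=\int_0^{a_m}b(u,a)\,u(a)\,\rd a$, the relation $b(-u,\cdot)=b(u,\cdot)$ yields $\ell(-u)=\int_0^{a_m}b(u,a)\,(-u(a))\,\rd a=-\ell(u)$, i.e. $\ell$ is odd; hence both $\A(u)=\A(-u)$ and $\ell(u)=-\ell(-u)$ are satisfied.

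I would then identify the operator $Q_u$. For fixed $u\in\Sigma$ the operator from the corollary
\[
Q_u=\int_0^{a_m}b(u,a)\,e^{-\int_0^a \mu(u,r)\,\rd r}\, U_{A(u)}(a,0)\,\rd a=\int_0^{a_m}b(u,a)\,\Pi_u(a,0)\,\rd a
\]
is precisely the bounded linear map $w\mapsto\int_0^{a_m}b(u,a)\,\Pi_u(a,0)w\,\rd a$ on $E_\varsigma$, i.e. the map $w\mapsto\ell(\Pi_u(\cdot,0)w)$ with the density $b$ \emph{frozen} at $u$; this is exactly the $Q_u$ of Proposition \ref{B5} with the choice $\alpha=\varsigma$. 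By hypothesis $Q_u\in\ml_+(E_\varsigma)$, so it is positive, and assumption \eqref{41} reproduces verbatim the requirement in Proposition \ref{B5} that every positive eigenvalue of $Q_u$ be geometrically simple with a positive eigenvector. The one point demanding care — and the main (though mild) obstacle — is the correct reading of the nonlinear functional $\ell$ when forming $Q_u$: one must observe that in the eigenvalue identity $\gamma u(\ve)=n(\ve)\,Q_{u(\ve)}\gamma u(\ve)$ the coefficient $b(u(\ve),\cdot)$ is held fixed, which is exactly what turns $Q_{u(\ve)}$ into the genuine linear operator to which the Krein--Rutman type assumption \eqref{41} applies.

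Finally, with all hypotheses verified, Proposition \ref{B5} applies and shows that the set $C^+$ consists of positive equilibria only. Unwinding the definition of $C^+$, this means that for each $\ve\in(-\ve_0,\ve_0)\setminus\{0\}$ either $(n(\ve),u(\ve))$ (when $\gamma u(\ve)\in E_0^+$) or $(n(\ve),-u(\ve))$ (otherwise) is a positive nontrivial equilibrium carrying the parameter value $n(\ve)$, which is precisely the assertion of the corollary.
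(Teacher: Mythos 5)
Your proposal is correct and takes essentially the same route as the paper: Corollary \ref{B11} is obtained there precisely by invoking Proposition \ref{B5} with the positive evolution operators $\Pi_u(a,\sigma)=e^{-\int_\sigma^a\mu(u,r)\rd r}\,U_{A(u)}(a,\sigma)$ and the operators $Q_u$ built for Theorem \ref{B10}, under the hypotheses \eqref{40} and \eqref{41}. Your explicit checks (symmetry of $\A$, oddness of $\ell$, and the observation that $Q_{u(\ve)}$ is the genuine linear operator with $b$ frozen at $u(\ve)$, so that $\gamma u(\ve)=n(\ve)Q_{u(\ve)}\gamma u(\ve)$) merely spell out details the paper leaves implicit.
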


\subsection{Example} Let $\Om\subset\R^N$, $N\ge 1$, be a bounded and smooth domain lying locally on one side of $\partial\Om$. Let $\partial\Om=\Gamma_0\cup\Gamma_1$, where $\Gamma_0,\Gamma_1$ are both open and closed in $\partial\Omega$ and $\Gamma_0\cap\Gamma_1=\emptyset$. Consider
$$
\Ac(u,x)w:=-\nabla_x\cdot\big(\mathpzc{a}(u,x)\nabla_x w\big)+\mathpzc{a}_1(u,x)\cdot\nabla_x w+\mathpzc{a}_0(u,x)w\ ,
$$
where
\bqn
\begin{aligned}\label{50}
&[u\mapsto \mathpzc{a}(u,\cdot)]\in C^m\big(\Phi,C^{1+\sigma}(\bar{\Om})\big)\ ,\\
& [u\mapsto \mathpzc{a}_1(u,\cdot)]\in C^m\big(\Phi,C^{\sigma}(\bar{\Om},\R^N)\big)\ ,\quad [u\mapsto \mathpzc{a}_0(u,\cdot)]\in C^m\big(\Phi,C^{\sigma}(\bar{\Om})\big)\ ,
\end{aligned}
\eqn
for some $m\ge 1$, $\sigma\in (0,1)$ small, and some open ball $\Phi$ in $C^{1+\sigma}(\bar{\Om})$ around $0$. Moreover, assume that
\bqn\label{51}
\mathpzc{a}(u,x)>0\ ,\quad x\in\bar{\Om}\ ,\quad u\in \Phi\ .
\eqn
Let 
\bqn\label{52}
\nu_0\in C^1(\Gamma_1)
\eqn
and let $\nu$ denote the outward unit normal to $\Gamma_1$. Let
\bqnn
\mathcal{B}(x)w:=\left\{\begin{array}{ll} w\ , & \text{on}\ \Gamma_0\ ,\\
 \frac{\partial}{\partial\nu}w+\nu_0(x) w\ , & \text{on}\ \Gamma_1\ .
\end{array}
\right.
\eqnn
Fix $p>N+2$ and let $E_0:=L_p(\Om)$ be ordered by its positive cone of functions that are nonnegative almost everywhere. Note that $E_0$ is a UMD-space. Set $E_1:=\Wqb^2(\Om)$, where
\bqnn
\Wqb^{2\xi}(\Om):=\left\{\begin{array}{ll} W_p^{2\xi}(\Om)\ ,\quad &0<{2\xi}<1/p\ ,\\
\big\{w\in W_p^{2\xi}(\Om)\,;\, u\vert_{\Gamma_0}=0\big\}\ ,& 1/p<{2\xi}<1+1/p\ ,\\
\big\{w\in W_p^{2\xi}(\Om)\,;\, \mathcal{B}u=0\big\}\ ,& {2\xi}>1+1/p\ ,
\end{array}
\right.
\eqnn
are subspaces of the usual Sobolev-Slobodeckii spaces $W_p^{2\xi}(\Om)$.
Note that if  $E_\xi:=(E_0,E_1)_{\xi,p}$, then
$$
E_{\varsigma}\,\dot{=}\,\Wqb^{2\varsigma}\dhr E_\alpha\,\dot{=}\,\Wqb^{2\alpha}\hookrightarrow C^{1+\sigma}(\bar{\Om})\ ,\quad 1+n/p+\sigma<2\alpha <2\varsigma\ ,
$$
where dots indicate equivalent norms \cite{Triebel}, and $\mathrm{int}(E_\varsigma^+)\ne\emptyset$.
Consider
$$
A(u)w:=\Ac(u,\cdot)w\ ,\quad w\in E_1\ ,\quad u\in \Phi\ .$$
Then \eqref{30} and \eqref{35} follow, e.g., from \cite{AmannIsrael}. Moreover, suppose that
\bqn\label{53}
\begin{aligned}
&\mathpzc{a}_0(0,x)\ge 0\ ,\quad \mathpzc{a}_1(0,x)=0\ ,\quad x\in\bar{\Om}\ ,\\
&\nu_0(x)\ge 0\ ,\quad x\in\Gamma_1\ ,
\end{aligned}
\eqn
and put $A_0:=A(0)$. According to \cite{AmannIsrael}, $-A_0$ is resolvent positive and generates a contraction semigroup on each $L_q(\Om)$, $1<q<\infty$, is self-adjoint in $L_2(\Om)$, and there exists a largest eigenvalue $\lambda_0\le 0$ of $-A_0\in\ml(E_1,E_0)$ with a positive eigenfunction $B\in E_\varsigma^+$. Moreover, \cite[Cor.13.6]{DanersKochMedina} ensures \eqref{t}. From \cite[III.Ex.4.7.3.d)]{LQPP} we deduce now \eqref{31} for each $\omega_0>0$. Given $a_m\in (0,\infty]$ and some open ball $\Sigma$ in $\E_1= L_p(J,\Wqb^2(\Om))\cap W_p^1(J,L_p(\Om))$ centered $0$ suppose that
\bqn\label{55}
\mu\ \text{satisfies}\ \eqref{32}\, ,\ \eqref{33}\ \text{with}\ \omega_0=0\, ,\ \text{and}\ \eqref{34}\ .
\eqn
Thus, if $b_0:=b(0,\cdot)$ for $b:=[u\mapsto b(u,\cdot)]\in C^m(\Sigma,L_{p'}^+(J))$ is nontrivial and normalized such that
\bqn\label{56}
\int_0^{a_m} b_0(a)\,e^{-\int_0^a\mu_0(r)\rd r}\,e^{\lambda_0 a}\,\rd a=1\ ,
\eqn
then $e^{-a A_0}B=e^{a\lambda_0 }B$ entails $Q_0B=B$, where
$$
Q_0=\int_0^{a_m} b_0(a)\,e^{-\int_0^a\mu_0(r)\rd r}\,e^{- a A_0}\,\rd a\in\mathcal{K}_+(E_{\varsigma})\ .
$$
Thus $r(Q_0)=1$ by \eqref{56} since $r(Q_0)$ is the only eigenvalue with positive eigenfunction. Therefore, Theorem~\ref{B10} entails:

\begin{prop}\label{vvvv}
Let $p>N+2$ and suppose \eqref{36}, \eqref{50}-\eqref{56}. Then the problem
\begin{align*}
&\partial_a u+\Ac\big(u(a),x\big)u+\mu(u,a)u=0\ ,\quad a\in J\ ,\quad x\in\Om\ ,\\
&u(0)=n\int_0^\infty b(u,a)u(a)\,\rd a\ ,\quad x\in\Om\ ,\\
&\mathcal{B}(x)u=0\ ,\quad a>0\ ,\quad x\in\partial\Om\ ,
\end{align*}
has a branch of nontrivial solutions $$\big(n(\ve),u(\ve)\big)\in\R^+\times \big(L_p(J,\Wqb^2(\Om))\cap W_p^1(J,L_p(\Om))\big)\ ,\quad 0<\vert\ve\vert<\ve_0\ ,$$ bifurcating from $(n,u)=(1,0)$, such that $u(\ve)(a)\in L_p^+(\Om)$ for $a\in J$ and $\ve>0$ small.
\end{prop}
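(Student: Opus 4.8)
The plan is to verify, one by one, the hypotheses of Theorem~\ref{B10} for the concrete data $A(u)w=\Ac(u,\cdot)w$, $\mu$, and $b$, and then simply to quote that theorem. The structural requirements are immediate: $E_0=L_p(\Om)$ with $p>N+2$ is a reflexive $L_p$-space, hence a UMD space, and its cone of a.e.\ nonnegative functions is closed and convex, so \eqref{22} holds. With $E_1=\Wqb^2(\Om)$ and $E_\xi=(E_0,E_1)_{\xi,p}$, the embedding $E_\varsigma\,\dot{=}\,\Wqb^{2\varsigma}\hookrightarrow C^{1+\sigma}(\bar\Om)$ (which uses $p>N+2$) shows that $E_\varsigma^+$ contains functions bounded away from $0$, so $\mathrm{int}(E_\varsigma^+)\ne\emptyset$.

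The substantive work lies in certifying the operator hypotheses \eqref{30}, \eqref{35}, \eqref{31}, and \eqref{t}. The $C^m$-regularity \eqref{30} of the Nemitskii operator $u\mapsto A(u)$ follows from the coefficient regularity \eqref{50} and the embedding $E_\alpha\hookrightarrow C^{1+\sigma}(\bar\Om)$ via the substitution-operator results of \cite{AmannEscherII}. Property \eqref{35}---analytic-semigroup generation and resolvent positivity of $-A(w)$ for $w\in\Phi$---is supplied by Amann's theory of second-order divergence-form operators under the mixed Dirichlet/Robin conditions encoded in $\Ac$, using uniform ellipticity \eqref{51} and \cite{AmannIsrael}. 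For the linearization $A_0=A(0)$ the sign conditions \eqref{53} place $A_0$ in the self-adjoint, resolvent-positive, contractive regime of \cite{AmannIsrael}, furnishing a largest eigenvalue $\lambda_0\le 0$ with positive eigenfunction $B\in E_\varsigma^+$; the bounded-imaginary-powers bound \eqref{31} is then read off from \cite[III.Ex.4.7.3.d)]{LQPP}, while the strong positivity \eqref{t} of $e^{-aA_0}$ comes from the strong maximum principle through \cite[Cor.13.6]{DanersKochMedina}. I expect this bundle of elliptic facts to be the main obstacle, since one must match a single mixed boundary value operator against several distinct external frameworks (generation, resolvent positivity, the BIP/$H^\infty$-calculus, and maximum-principle strong positivity) and check that the boundary-condition bookkeeping is consistent throughout.

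The conditions on the vital rates are then routine: \eqref{55} is exactly \eqref{32}--\eqref{34} with $\omega_0=0$, while \eqref{36} furnishes, through \eqref{24B}, the required decomposition \eqref{19B}--\eqref{20} of the birth functional $\ell$. It remains to secure $r(Q_0)=1$ as a simple eigenvalue. Since $b_0\not\equiv 0$ and $e^{-aA_0}$ is strongly positive, the compact operator
$$
Q_0=\int_0^{a_m} b_0(a)\,e^{-\int_0^a\mu_0(r)\,\rd r}\,e^{-aA_0}\,\rd a\in\mathcal{K}_+(E_\varsigma)
$$
is strongly positive, hence irreducible; by \cite[Thm.12.3]{DanersKochMedina} together with $\mathrm{int}(E_\varsigma^+)\ne\emptyset$, its spectral radius is a simple eigenvalue and the unique one admitting a positive eigenvector $B\in\mathrm{int}(E_\varsigma^+)$. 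Because $e^{-aA_0}B=e^{\lambda_0 a}B$, the relation $Q_0B=r(Q_0)B$ collapses to the scalar integral in \eqref{56}, so the normalization \eqref{56} forces $r(Q_0)=1$.

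All hypotheses of Theorem~\ref{B10} being verified, it yields the branch $\big(n(\ve),u(\ve)\big)$ with $n(\ve)=1+\lambda(\ve)$ and $u(\ve)=\ve\big(\Pi_0(\cdot,0)B+z(\ve)\big)$ of the asserted regularity. Finally, since the bifurcation eigenvector $B$ lies in $\mathrm{int}(E_\varsigma^+)$, the positivity statement of Theorem~\ref{B10}---itself resting on Proposition~\ref{A4}---gives $u(\ve)(a)\in L_p^+(\Om)$ for every $a\in J$ once $\ve>0$ is sufficiently small, which completes the proof.
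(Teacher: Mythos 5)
Your proposal is correct and follows essentially the same route as the paper: the paper's proof of Proposition \ref{vvvv} consists precisely of verifying the hypotheses of Theorem \ref{B10} for this data, invoking \cite{AmannIsrael} for \eqref{30}, \eqref{35} and the spectral facts about $A_0$ under \eqref{53}, \cite[III.Ex.4.7.3.d)]{LQPP} for \eqref{31}, \cite[Cor.13.6]{DanersKochMedina} for \eqref{t}, and \cite[Thm.12.3]{DanersKochMedina} together with $e^{-aA_0}B=e^{\lambda_0 a}B$ and the normalization \eqref{56} to conclude $r(Q_0)=1$ with positive eigenvector $B$. Your verification of the structural hypotheses (UMD, cone, $\mathrm{int}(E_\varsigma^+)\ne\emptyset$), of the vital-rate conditions, and the final appeal to Theorem \ref{B10} and Proposition \ref{A4} for positivity matches the paper's argument point for point.
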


\begin{rem}
The proposition above also holds if $E_0:=L_q(\Om)$ and $E_1:=W_{q,\mathcal{B}}^2(\Om)$ for $q>N+2$ different from $p\in (1,\infty)$. The only difference is that the interpolation space $E_\varsigma$ equals a subspace of the Besov space $B_{q,p}^{2\varsigma}(\Om)$.
\end{rem}

\subsection{Example}

We may also consider a functional dependence of $A$ on $u$. Indeed, let again $a_m\in (0,\infty]$ and let $\Om$, $E_1$, and $E_0$ be as in the previous example with $p\in (1,\infty)$ arbitrary. Given $u\in\E_1\hookrightarrow L_1(\R^+,L_p(\Om))$ let $U:=\int_0^\infty u(a)\rd a$ and consider $A(u)w:=\Ac(U,\cdot)w$ for $w\in E_1=\Wqb^2(\Om)$ with $\Ac$, $\mathcal{B}$ as in the previous example satisfying \eqref{51}-\eqref{53} but $\Phi$ in \eqref{50} is now an open ball in $L_p(\Om)$ centered at 0.
Suppose \eqref{55} with $\inf_{a>0}\mu(u,a)>\mathrm{type}(-A(u))$ for $u\in \Sigma$, \eqref{55}, \eqref{56} and $b\in C^m(\Sigma,L_{p'}^+(J))$. Moreover, assume that $b(u)\not\equiv 0$.
Then, analogously to the previous example,
$$
Q_u:=\int_0^{a_m} b(u,a)\, e^{-\int_0^a\mu(u,r)\rd r}\,e^{- aA(u)}\,\rd a\in\mathcal{K}_+(E_{\varsigma})
$$
is strongly positive for each $u\in\Sigma$, whence \eqref{266} by \cite[Thm.12.3, Cor.13.6]{DanersKochMedina}. We obtain from Theorem~\ref{B10} a branch of nontrivial solutions $$\big(n(\ve),u(\ve)\big)\in\R^+\times \big(L_p(J,\Wqb^2(\Om))\cap W_p^1(J,L_p(\Om))\big)\ ,\quad 0<\vert\ve\vert<\ve_0\ ,$$ to the problem
\begin{align*}
&\partial_a u+\Ac(U,x)u+\mu(u,a)u=0\ ,\quad a\in J\ ,\quad x\in\Om\ ,\\
&u(0)=n\int_0^{a_m} b(u,a)u(a)\,\rd a\ ,\quad x\in\Om\ ,\\
&\mathcal{B}(x)u=0\ ,\quad a>0\ ,\quad x\in\partial\Om\ ,
\end{align*}
bifurcating from $(n,u)=(1,0)$, such that $u(\ve)$ is positive for $\ve>0$ sufficiently small. If $\lambda_0(u)$ denotes the largest eigenvalue of $-A(u)\in\ml(E_1,E_0)$ for $u\in \Sigma$ and if
\bqn\label{zzz}
\int_0^{a_m} b(u,a)\,e^{-\int_0^a \mu(u,r)\rd r}\,e^{a\lambda_0(u)}\,\rd a\le 1\ ,\quad u\in \Sigma\ ,
\eqn
then
\bqnn
r(Q_u)\le 1\ ,\quad u\in\Sigma\ .
\eqnn
Indeed, if $B_u$ is a positive eigenfunction corresponding to $\lambda_0(u)$, then $e^{-aA(u)}B_u=e^{\lambda(u)a}B_u$ entails
$$
Q_u B_u=\int_0^{a_m} b(u,a)\,e^{-\int_0^a\mu(u,r)\rd r}\,e^{\lambda_0(u) a}\,\rd a\, B_u\ ,
$$
whence $r(Q_u)$ equals the left hand side of \eqref{zzz}. Recalling \eqref{28} we deduce that bifurcation must be supercritical provided \eqref{zzz} holds; that is, for $\ve\ge 0$ small we have $n(\ve)\ge 1$ and $u(\ve)$ is nonnegative. Note that $\lambda_0(u)\le 0$ if 
$\mathpzc{a}_0(u,\cdot)\ge 0$ and $\mathpzc{a}_0(u,\cdot)-\mathrm{div}(\mathpzc{a}_1(u,\cdot))\ge 0$ in $\Om$, 
$\nu_0\ge 0$ and $\mathpzc{a}_1(u,\cdot)\cdot\nu\ge 0$ on $\Gamma_1$
(see \cite[Rem.11.3]{AmannIsrael}) in which case the term $e^{\lambda_0(u)a}$ in \eqref{zzz} can be neglected. Moreover, $\mathrm{type}(-A(u))\le 0$ in this case. If the functions $\mathpzc{a}$, $\mathpzc{a}_1$, $\mathpzc{a}_0$ as well as $\mu$ and $b$ are symmetric with respect to $u$, that is, if $\mathpzc{a}(u,\cdot)=\mathpzc{a}(-u,\cdot)$ etc., then Proposition \ref{B5} entails that there is a positive equilibrium solution for any value of $n(\ve)$, $-\ve_0<\ve<\ve_0$.

\subsection{Example}

Let $a_m\in (0,\infty]$ and let $\Omega\subset\R^N$, $N\ge 1$, be a bounded and smooth domain. Given $u\in \E_1$ let $U:=\int_0^{a_m}u(a)\rd a$ and consider
$$
A(u)w:=-\nabla_x\cdot\big(\mathpzc{a}(U,x)\nabla_xw\big)\ ,\quad w\in E_1:=\Wqb^2(\Om):=\{v\in W_p^2(\Om)\,;\,\partial_\nu v=0\}
$$
so that $-A(u)\in\mathcal{H}(E_1,E_0)$ for $E_0:=L_p(\Om)$ and $p\in (1,\infty)$ provided that $\mathpzc{a}\in C^m(L_p(\Om),C^1(\bar{\Om}))$ is such that $\mathpzc{a}(U,x)>0$ for $x\in\bar{\Om}$. If $(n,u)$ is any positive solution to the problem
\begin{align*}
&\partial_a u+A(u)u+\mu(u,a)u=0\ ,\quad a>0\ ,\\
&u(0)=n\int_0^{a_m} b(u,a)u(a)\,\rd a\  ,
\end{align*}
then the relation $u(0)=nQ_u u(0)$ must hold, where 
$$
Q_u:=\int_0^{a_m} b(u,a)\, e^{-\int_0^a\mu(u,r)\rd r}\,e^{- a A(u)}\,\rd a\ .
$$
Therefore, owing to the fact that
\bqn\label{ttt}
\int_\Om e^{-a A(u)}\phi\,\rd x= \int_\Om \phi\,\rd x\ ,\quad \phi\in L_p(\Om)\ ,
\eqn
it follows by integrating the previous relation that necessarily
\bqn\label{zz}
1=n\int_0^{a_m}b(u,a)\, e^{-\int_0^a\mu(u,r)\rd r}\,\rd a=:n\,q(u)\ ,
\eqn
which is the same constraint as in the non-diffusive case (see \cite{CushingI}). Let $b\in C^m(\E_1,L_{p'}^+(J))$ and suppose \eqref{55}, \eqref{56} with $\lambda_0=0$ (in particular $q(0)=1$). Further assume that
\bqn\label{zu}
b(u,a)\le b(0,a)=b_0(a)\ ,\quad \mu(u,a)\ge \mu(0,a)=\mu_0(a)
\eqn
for $a\in J$ and $u\in\E_1^+$, which is a common modeling assumption stating that effects of population densities do neither increase fertility nor decrease mortality. Then $q(u(\ve))\le q(0)=1$ for the positive solution $(n(\ve),u(\ve))$, $\ve>0$ small, provided by Theorem \ref{B10}. Thus \eqref{zz} entails $n(\ve)\ge 1$ for $\ve>0$ small, that is, bifurcation must be supercritical, and there is no equilibrium solution other than the trivial $u\equiv 0$ corresponding to a parameter value $n<1$.

We shall point out that the present example simply reflects the non-diffusive case in the sense that our results here actually follow from the case $A\equiv0$ (see \cite{CushingI}). For this it is enough to observe that $\lambda_0=0$ is an eigenvalue of $-A(u)$ with corresponding constant eigenfunctions.

Moreover, taking $B={\bf 1}$ we have $\Pi_0(a,0)B=e^{-\int_0^a\mu_0(s)\rd s}$ and since the projection onto $M^\perp$ in Proposition \ref{A3} is given by
$$
1-P_M=\big[w\mapsto \frac{1}{\vert\Om\vert}\int_\Om w(x)\,\rd x\big]\ ,
$$
the direction of  bifurcation, given by $\zeta$ in Proposition \ref{A3}, can in principle be computed  explicitly using \eqref{ttt} also if one does not assume \eqref{zu}.

\begin{rem}
In all our examples we omitted a dependence of $\mu$ and $b$ on the spatial variable for simplicity. However, it is clear that such a dependence can be included as well.
\end{rem}

\end{document}